\documentclass[10pt,twoside,reqno]{amsart}
\usepackage{amsmath}
\usepackage{amssymb}
\usepackage{amsfonts}
\usepackage{amsthm}
\usepackage{graphicx}
\usepackage[initials]{amsrefs}
\usepackage{fancyhdr}
\usepackage{hyperref}

\usepackage{tikz}
\usepackage{tikz-cd}

\usepackage[all,cmtip]{xy}

\newtheorem{theorem}{Theorem}[section]
\newtheorem{corollary}[theorem]{Corollary}
\newtheorem{lemma}[theorem]{Lemma}
\newtheorem{proposition}[theorem]{Proposition}
\theoremstyle{definition}
\newtheorem{definition}[theorem]{Definition}
\newtheorem{example}[theorem]{Example}
\newtheorem{remark}[theorem]{Remark}

\begin{document}

\title[Intrinsic Hilbert metrics on norms]{\large Intrinsic Hilbert metrics on cones of equivalent norms}
\author[Acosta-Portilla Juan Rafael]{Juan Rafael Acosta-Portilla$^{1}$}
\date{September 2026}
\maketitle

\begin{center}
{\footnotesize
$^{1}$Instituto de Investigaciones y Estudios Superiores Económicos y Sociales, Universidad Veracruzana, México\\
E-mail: juaacosta@uv.mx\\
}
\end{center}

\bigskip

{\footnotesize
\noindent
{\bf Abstract.}
Let $X$ be a Banach space and let $\mathcal{N}(X)$ denote the family of equivalent norms on $X$. We study Hilbert projective metrics on its projectivization $\mathcal{N}'(X)$ induced by ambient cones of nonnegative functions, with particular attention to the intrinsic Hilbert metric induced by the cone $\mathcal{N}(X)\cup\{0\}$. First, we show that the symmetric logarithmic metric on $\mathcal{N}'(X)$ is the Hilbert projective metric induced by the cone of nonnegative real-valued functions, and we characterize the ambient cones that induce the same metric. We then study the intrinsic order on $\mathcal{N}(X)$. For this purpose, we introduce the triangular defect
$\Delta_p(x,y)=p(x)+p(y)-p(x+y)$
and prove that two norms $p,q\in\mathcal{N}(X)$ belong to the same intrinsic part if and only if their triangular defects are uniformly comparable, that is,
$a\Delta_p\leq\Delta_q\leq b\Delta_p$
for some $a,b>0$. This yields an explicit formula for the intrinsic Hilbert metric in terms of the pointwise comparison of the norms and of their triangular defects. Finally, the map $\Phi(p)=(p,\Delta_p)$ realizes the intrinsic cone order of $\mathcal{N}(X)$ inside a canonical product cone of nonnegative functions and preserves the Hilbert metric on each intrinsic part.

\noindent
{\bf Key Words and Phrases}:
Equivalent norms, Hilbert projective metric, renorming, triangular defect, cone order.

\noindent {\bf 2020 Mathematics Subject Classification}: 
46B03, 46B20, 46A40.
}

\bigskip

\section{Introduction}

Banach space geometry studies, among other phenomena, geometric properties related to the rotundity and smoothness of the unit ball \cite{Banach1932, clarkson1936uniformly, Mazur1933, Smulyan1940}, the asymptotic behavior of weakly null sequences \cite{prus1992opial, prus1997noncreasy, prus2005nearly}, packing properties of the unit ball \cite{james1964uniformly, kottman1970packing} and fixed point properties for nonexpansive mappings \cite{betiuk2013susuki, agarwal2009fixedpointbook, goebel1990topics}. One of the main tools for studying these phenomena, and for constructing examples and counterexamples exhibiting particular geometric behaviors, is renorming theory \cite{DevilleGodefroyZizler1993SmoothnessRenormings, Godefroy2001, GuiraoMontesinosAizler2022RenormingsToolbox}. From this point of view, however, the emphasis is usually placed on individual norms and on the geometric properties they induce.

More precisely, let $X$ be a fixed Banach space and let $\mathcal{N}(X)$ denote the family of all equivalent norms on $X$. A large part of renorming theory is concerned with the existence, construction, or characterization of particular elements of $\mathcal{N}(X)$ satisfying prescribed geometric properties. In this sense, the objects under study are usually the individual elements of $\mathcal{N}(X)$ rather than the structure of $\mathcal{N}(X)$ as a whole.

A complementary point of view is to regard the family of equivalent norms itself as a mathematical object and to investigate its global structure. Several works have considered metrics and topological structures on spaces of equivalent norms, arising for instance from distortion, renorming, or genericity considerations \cite{TomczakJaegermann1989BanachMazurDA, dominguezPhoti2008porosity, dominguezPhoti2010genericityinsomebanach, fabianZajicekZizler1982residualityRotundNorms}. This suggests a broader problem: to understand which geometric, metric, and algebraic structures arise naturally on $\mathcal{N}(X)$ itself.

Hilbert geometry provides a natural framework for addressing this question. The classical Hilbert metric originates from the cross-ratio geometry of convex domains and admits a projective formulation on positive cones \cite{Hilbert1895, Bushell1973hilbert'sMetric}. This point of view was developed in the work of Birkhoff and Samelson in connection with positive operators and the Perron--Frobenius theory \cite{Birkhoff1957, Samelson1957PerronFrobeniustheorem}, and was later formulated in the setting of cones in Banach spaces by Bushell \cite{Bushell1973hilbert'sMetric}. Subsequent developments have shown that Hilbert's projective metric provides a useful link between cone-induced orders, positive operators, and nonlinear analysis \cite{LemmensNussbaum2012, LemmensNussbaum2014, nussbaum1988hilbert, nussbaum1989hilbert}.

In the present work, we adopt an intrinsic point of view. Rather than imposing an external geometric structure on $\mathcal{N}(X)$, we begin with structures that are already present in the family of equivalent norms. The set $\mathcal{N}(X)\cup\{0\}$ is naturally a cone: positive scalar multiples and sums of equivalent norms remain equivalent norms. Consequently, this cone carries an intrinsic order, and its projectivization

\begin{equation}
\mathcal{N}'(X)=[\mathcal{N}(X)]
\end{equation}

\noindent inherits the corresponding Hilbert projective metric. Our purpose is to study this intrinsic metric structure and to compare it with other natural metrics already considered on the projectivized space of equivalent norms. In particular, we are interested in the symmetric logarithmic metric, which is naturally defined in terms of the optimal equivalence constants between norms:

\begin{equation}
d_{\log}([p],[q]) = \log \frac{u}{l}
\end{equation}

\noindent for every $p , q \in \mathcal{N}(X)$, where $u \geq l >0$ are the optimal constants such that

\begin{equation}
l p (x) \leq q(x) \leq u p(x)
\end{equation}

\noindent for every $x \in X$. This leads us to the following question: 

\begin{center}
{\it what is the Hilbert geometry induced by the cone $\mathcal{N}(X)\cup\{0\}$ itself, and how is it related to the symmetric logarithmic metric $d_{\log}$?}
\end{center}

As a first step, we identify $d_{\log}$ with the Hilbert projective metric induced on $\mathcal{N}'(X)$ by the ambient cone $\mathbb{R}^X_+$ of nonnegative real-valued functions from $X$ to $\mathbb{R}$. We then consider more general cones containing subfamilies of $\mathcal{N}(X)$ and characterize those for which the induced Hilbert projective metric coincides with $d_{\log}$. 

We then turn to the intrinsic Hilbert metric induced by the cone $\mathcal{N}(X)\cup \{0\}$. This case presents an additional obstruction. Pointwise nonnegativity of a difference $p-q$ does not guarantee that it is an equivalent norm: it may fail to be positive away from the origin in a uniform sense, and it may fail to satisfy the triangle inequality. To encode the latter obstruction, for every $p\in\mathcal{N}(X)$ we introduce the triangular defect

\begin{equation}
\Delta_p(x,y)=p(x)+p(y)-p(x+y).
\end{equation}

\noindent Using this function, we characterize the intrinsic parts of $\mathcal{N}(X)$ in terms of uniform comparison between triangular defects. We then obtain an explicit formula for the intrinsic Hilbert metric in terms of two comparison mechanisms: the pointwise comparison between the norms and the corresponding comparison between their triangular defects.

\begin{equation}
d_{\mathcal{N}(X)}([p],[q]) = \log \frac{ \max \{ M_{\log}(q/p), M_{\log}(\Delta_q/\Delta_p)\}}{\min \{ m_{\log}(q/p), m_{\log}(\Delta_q/\Delta_p) \}}.
\end{equation}
 
Finally, we show that this decomposition is not merely formal. The map

\begin{equation}
\Phi(p)=(p,\Delta_p)
\end{equation}

\noindent realizes the intrinsic geometry of $\mathcal{N}(X)$ inside a product cone of nonnegative functions. After identifying the appropriate cone in the first coordinate, $\Phi$ preserves both the intrinsic cone order and the corresponding Hilbert metric on each intrinsic part. Thus, the intrinsic geometry of $\mathcal{N}(X)$ admits a natural representation in terms of two coordinates: the first records the pointwise comparison of norms, while the second records the pointwise comparison of their triangular defects.

Throughout the paper, we provide examples illustrating limiting cases and showing that the distinctions between the different notions considered are genuine.

\section{Preliminaries}
In this section, we introduce the basic definitions and fix the notation used throughout the paper. The section is divided into four subsections, devoted respectively to cones and cone-induced orders, parts and projectivization, the Hilbert projective metric, and the symmetric logarithmic metric on $\mathcal{N}'(X)$.

\subsection*{Cones and cone-induced orders} 
We begin by introducing cones, the orders induced by them, and a basic structural property that will be used throughout the paper. The definitions used in this subsection are standard and can be found in most texts on convex analysis. However, because of their close connection with the topics considered in this work, we refer the reader in particular to \cite{LemmensNussbaum2012, LemmensNussbaum2014}.

\begin{definition}[Positive cone]
Let $X$ be a real vector space. A nonempty convex $C\subset X$ is called a cone if

\begin{equation}
\lambda C \subset C
\end{equation}

\noindent for every $\lambda > 0$  and 

\begin{equation}
C \cap (- C) \subset \{ 0 \}
\end{equation}

\noindent If $0 \in C$ we say that $C$ is a cone with vertex at $0$ or a pointed cone.
\end{definition}

\begin{definition}[Order induced by a cone]
Let $C$ be a cone in a real vector space $X$. The relation $\preceq_C$ on $X$ induced by $C$ is defined by

\begin{equation}
p\preceq_C q  \quad \text{if} \quad q - p \in C.
\end{equation}

\noindent This relation is transitive. Moreover, if $C$ has a vertex, then it is reflexive and antisymmetric, and hence it is a partial order.
\end{definition}

%
%

\begin{definition}[Almost Archimedean cone]
Let $C$ be a cone with vertex in a real vector space $X$. The cone $C$ is said to be \emph{almost Archimedean} if, for every $p\in X$ and $u\in C$, the inequalities

\begin{equation}
-\varepsilon u\preceq_C p\preceq_C\varepsilon u
\end{equation}

\noindent for every $\varepsilon>0$ imply that $p=0$.
\end{definition}



\subsection*{Parts and projectivization} 

\begin{definition}[Parts of a cone]
Let $C$ be a cone. Two nonzero elements $p,q\in C$ are said to be comparable if there exist constants $\alpha,\beta>0$ such that

\begin{equation}
\alpha p \preceq_C q \preceq_C\beta p.
\end{equation}

\noindent This relation is an equivalence relation on $C\setminus \{0\}$, and its equivalence classes are called the parts of $C$. If $p \in C \setminus \{0 \}$ it is clear that $\alpha p \preceq_C p \preceq_C \beta p$ for any $0 < \alpha < 1 < \beta$. The part containing $p$ is

\begin{equation}
C_p = \{q \in C\setminus\{0 \} \mid q \text{ is comparable to } p \}.
\end{equation}

\noindent Moreover, $C_p$ is a cone without vertex.
\end{definition}

\begin{definition}[Projectivization of a cone]
Given a cone $C$, the projectivization of $C$ is 

\begin{equation}
[C] = (C \setminus \{ 0\}) / \sim
\end{equation}

\noindent where $\sim$ is the equivalence relation given by $p \sim q$ if there exists $\lambda > 0$ such that $q = \lambda p$. For every $D \subset C \setminus \{0\}$, we denote by $[D] \subset [C]$ its projectivization, namely

\begin{equation}
[D] = \{[p] \in [C] \mid p \in D \}.
\end{equation} 
\end{definition}

\subsection*{Hilbert projective metric}
In this subsection we present the Hilbert projective metric. The following definition can be found in \cite{Bushell1973hilbert'sMetric, LemmensNussbaum2012, nussbaum1988hilbert, nussbaum1989hilbert, PapadopoulosTroyanov2014HandBookHilbertGeometry}

\begin{definition}[Hilbert's metric]
Let $C$ be a cone. For every comparable $p , q \in C \setminus\{ 0\}$ set

\begin{equation}
m_C(q/p) = \sup \{ \alpha > 0 \mid \alpha p \preceq_C q\} \qquad M_C(q/p) = \inf \{\beta > 0 \mid q \preceq_C \beta p \}.
\end{equation}

\noindent Note that $0 < m_C(q/p) \leq  M_C(q/p) < \infty$. For comparable elements $p , q \in C\setminus \{0\}$ we define the Hilbert projective metric by

\begin{equation}
d_C(p , q) = \log \frac{M_C(q/p)}{m_C(q/p)}.
\end{equation}

\noindent If $p$ and $q$ are two nonzero elements of $C$ which are not comparable, we define $d_C(p,q) = \infty$. Let $p \in C\setminus \{0 \}$ and let $C_p$ be the part containing $p$. If $C$ is an almost Archimedean cone, then the Hilbert projective distance induces a metric on the projectivized part $[C_p]$, defined by

\begin{equation}
d_C([p], [q]) = d_C(p , q).
\end{equation}

\noindent Moreover, the almost Archimedean property is equivalent to the fact that $d_C$ is a metric on every part of $C$.
\end{definition}

\begin{proposition}\label{Proposition comparison of Hilbert seminorms wrt subsets}
Let $C$ and $D$ be two cones with vertex such that $C \subset D$. Then, for all $C$-comparable elements $p,q\in C$, one has

\begin{equation}
M_C(q/p) \geq M_D(q/p), \qquad m_C(q/p) \leq m_D(q/p).
\end{equation}

\noindent Thus, if $p$ and $q$ belong to the same part of $C$, then they also belong to the same part of $D$, and

\begin{equation}
d_C([p],[q])\geq d_D([p],[q]).
\end{equation}

\noindent Moreover, if 

\begin{equation}
d_C([p],[q]) = d_D([p],[q]),
\end{equation} 

\noindent then 

\begin{equation}
M_C(q/p)=M_D(q/p), \qquad m_C(q/p)=m_D(q/p).
\end{equation}
\end{proposition}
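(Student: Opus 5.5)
The argument reduces to comparing the sets over which the suprema and infima in the definition of $m$ and $M$ are taken. Since $C\subset D$, whenever $v-w\in C$ we also have $v-w\in D$. In other words, $w\preceq_C v$ implies $w\preceq_D v$ for all $v,w$ in the ambient space.

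Fix $C$-comparable $p,q\in C\setminus\{0\}$. The comparison of constants follows from two set inclusions:
\begin{equation*}
\{\alpha>0 \mid \alpha p\preceq_C q\}\subset\{\alpha>0 \mid \alpha p\preceq_D q\}, \qquad \{\beta>0 \mid q\preceq_C \beta p\}\subset\{\beta>0 \mid q\preceq_D \beta p\}.
\end{equation*}
Taking suprema of the first pair of sets gives $m_C(q/p)\leq m_D(q/p)$. Taking infima of the second pair gives $M_C(q/p)\geq M_D(q/p)$.

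In particular, if $\alpha p\preceq_C q\preceq_C\beta p$, then the same $\alpha,\beta$ witness $D$-comparability. So $p$ and $q$ lie in the same part of $D$, and $m_D(q/p)$ and $M_D(q/p)$ are defined.

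The one point needing care is that $D$-comparable elements also satisfy $m_D(q/p)\leq M_D(q/p)<\infty$. I take this from the remark in the definition of Hilbert's metric, which records $0<m\le M<\infty$ for any cone. With that in hand we get the chain
\begin{equation*}
0<m_C(q/p)\leq m_D(q/p)\leq M_D(q/p)\leq M_C(q/p)<\infty.
\end{equation*}
Since $\log$ is increasing, this yields $d_C(p,q)=\log\frac{M_C}{m_C}\geq \log\frac{M_D}{m_D}=d_D(p,q)$. Because the projective distances are defined through representatives, the same inequality holds for $d_C([p],[q])$ and $d_D([p],[q])$.

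For the equality case, suppose $d_C([p],[q])=d_D([p],[q])$. Then
\begin{equation*}
\frac{M_C(q/p)}{M_D(q/p)}\cdot\frac{m_D(q/p)}{m_C(q/p)}=1.
\end{equation*}
Each factor is at least $1$ by the inequalities above, and a product of two numbers that are each at least $1$ equals $1$ only if both equal $1$. Hence $M_C(q/p)=M_D(q/p)$ and $m_C(q/p)=m_D(q/p)$.

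There is no genuine obstacle in this argument. The only subtle point is the finiteness and positivity of the $D$-constants noted above, which is what allows us to divide by them.
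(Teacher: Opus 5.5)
Your proof is correct; the paper states this proposition without proof, and your set-inclusion argument is the standard one: every $C$-witness is a $D$-witness, so $m_C\le m_D\le M_D\le M_C$, and then two factors each $\ge 1$ with product $1$ must both equal $1$. The one inequality you import, $m_D(q/p)\le M_D(q/p)$, holds for any cone with vertex: $\alpha p\preceq_D q\preceq_D\beta p$ gives $(\beta-\alpha)p\in D$, and together with $D\cap(-D)\subset\{0\}$ and $p\neq 0$ this forces $\alpha\le\beta$.
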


\noindent Thus, smaller cones induce smaller parts and larger Hilbert distances. Moreover, equality of Hilbert distances under a cone inclusion is rigid: it forces equality of the upper and lower comparison constants separately, not only equality of their quotient.


%
%
%
%

\subsection*{Symmetric logarithmic metric on equivalent norms}

Several metrics on the space of equivalent norms $\mathcal N(X)$ have been considered in the literature; see, for instance, \cite{dominguezPhoti2008porosity, dominguezPhoti2010genericityinsomebanach, fabianZajicekZizler1982residualityRotundNorms, TomczakJaegermann1989BanachMazurDA}. In this subsection, we present the symmetric logarithmic distance in $\mathcal{N}(X)$.

Let $X$ be a Banach space. We denote by $\mathcal{N}(X)$ the cone of equivalent norms on $X$ and $\mathcal{N}'(X) := [\mathcal{N}(X)]$ its projectivization. For each $p, q \in \mathcal{N}(X)$ there exist optimal $u \geq l > 0$ such that

\begin{equation}\label{optimal constants for equivalent norms}
l p(x) \leq q(x) \leq u p(x) 
\end{equation} 

\noindent for each $x \in X$. It is clear that the values

\begin{equation}\label{definition of symmetric log values}
M_{\log}(q/p) := \sup_{x\neq 0}\frac{q(x)}{p(x)}, \qquad m_{\log}(q/p) := \inf_{x\neq 0}\frac{q(x)}{p(x)}
\end{equation}

\noindent satisfy 

\begin{equation}
M_{\log}(q/p) = u , \qquad m_{\log}(q/p) = l.
\end{equation}

A detailed exposition of the following definition can be found in \cite{dominguezPhoti2008porosity, dominguezPhoti2010genericityinsomebanach}

\begin{definition}[Log symmetric distance]
The symmetric logarithmic metric $d$ on $N'(X)$ is defined by

\begin{equation}\label{definition distance d}
d_{\log} \left([p], [q] \right) = \log \left( \frac{u}{l} \right) =\log \frac{M_{\log}(q/p)}{m_{\log}(q/p)}
\end{equation}

\noindent for each $[p], [q] \in \mathcal{N}'(X)$.
\end{definition}

\noindent We denote by

\begin{equation}
\mathbb{R}^X_+=\{f:X\to\mathbb{R}\mid f(x)\geq 0\text{ for every }x\in X\},
\end{equation} 

\noindent the cone of nonnegative real-valued functions. In the following proposition, we relate the symmetric logarithmic distance to the Hilbert projective metric.

\begin{proposition}\label{proposition pointwise cone induces logarithmic metric}
Let $X$ be a Banach space, and let $p,q\in\mathcal{N}(X)$. If

\begin{equation}
C=\mathbb{R}^X_+,
\end{equation}

\noindent then

\begin{equation}
M_C(q/p)=M_{\log}(q/p), \qquad m_C(q/p)=m_{\log}(q/p).
\end{equation}

\noindent Consequently, the Hilbert projective metric induced by $\mathbb{R}^X_+$ on $\mathcal{N}'(X)$ coincides with the symmetric logarithmic metric; that is,

\begin{equation}
d_C([p],[q])=d_{\log}([p],[q])
\end{equation}

\noindent for every $p,q\in\mathcal{N}(X)$.
\end{proposition}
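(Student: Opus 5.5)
The plan is to unwind the definitions of $M_C$ and $m_C$ for $C=\mathbb{R}^X_+$, where the order $\preceq_C$ is just pointwise inequality: $f\preceq_C g$ if and only if $f(x)\le g(x)$ for every $x\in X$. First we check that $p$ and $q$ are $C$-comparable. Since $p,q$ are equivalent norms, there are constants $u\ge l>0$ with $l p\le q\le u p$ pointwise. This is exactly $l p\preceq_C q\preceq_C u p$, so $p,q$ lie in the same part of $C$, and $M_C(q/p)$, $m_C(q/p)$ are defined.

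Next we compute $M_C(q/p)=\inf\{\beta>0\mid q(x)\le \beta p(x)\ \forall x\}$. At $x=0$ both sides vanish, so the condition is vacuous there. For $x\neq 0$ we have $p(x)>0$, so the condition reads $\beta\ge q(x)/p(x)$. Hence the admissible set is $\{\beta>0 \mid \beta\ge \sup_{x\neq0} q(x)/p(x)\}=[M_{\log}(q/p),\infty)$, and its infimum is $M_{\log}(q/p)$. Symmetrically, $\alpha p\preceq_C q$ holds if and only if $\alpha\le q(x)/p(x)$ for all $x\ne 0$. The admissible set is therefore $(0,m_{\log}(q/p)]$, and its supremum is $m_{\log}(q/p)$. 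The positivity and finiteness of these quantities follow from the equivalence constants in \eqref{optimal constants for equivalent norms}.

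Taking the logarithm of the quotient gives $d_C(p,q)=\log\bigl(M_{\log}(q/p)/m_{\log}(q/p)\bigr)=d_{\log}([p],[q])$ by \eqref{definition distance d}. To pass to the projectivization, we note two facts. First, $\mathbb{R}^X_+$ is almost Archimedean: if $-\varepsilon u\le f\le \varepsilon u$ pointwise for all $\varepsilon>0$, then $f=0$. Second, both $M_C$ and $M_{\log}$ scale as $M(\lambda q/\mu p)=(\lambda/\mu)M(q/p)$, and likewise for $m$, so both distances are well defined on classes.

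There is no real obstacle. The only point needing care is the vertex $x=0$, where $p(0)=q(0)=0$ imposes no constraint. We must also use that $p(x)>0$ for $x\neq0$ in order to divide.
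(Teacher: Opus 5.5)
Your proof is correct and complete. You identify $\preceq_C$ with the pointwise order, check comparability via the equivalence constants, and compute the admissible sets as $[M_{\log}(q/p),\infty)$ and $(0,m_{\log}(q/p)]$; the paper gives no written proof of this proposition, but it proves the analogous Lemma~\ref{lemma pointwise product cone induces logarithmic metric on triangular defects} for $\mathbb{R}^{X\times X}_+$ by exactly this definitional unwinding, so your approach matches the paper's.
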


\section{Ambient cone metrics on $\mathcal{N}'(X)$}

In this section, we relate the symmetric logarithmic distance on $\mathcal{N}'(X)$ to Hilbert projective metrics induced by ambient cones containing $\mathcal{N}(X)$.

The cone $\mathcal{N}(X)$ admits several natural ambient structures. It is contained in the cone $\mathcal{S}(X)$ of continuous seminorms on $X$ and also in the cone of nonnegative real-valued functions $\mathbb{R}^X_+$, as well as in the vector spaces $\operatorname{Lip}(X,\mathbb{R})$ of Lipschitz functions and $C(X,\mathbb{R})$ of continuous functions. It also generates the natural algebraic ambient space

\begin{equation}
\operatorname{span}\mathcal{N}(X) = \mathcal{N}(X)-\mathcal{N}(X),
\end{equation}

\noindent whose elements are differences of equivalent norms on $X$. However, not all of these ambient spaces generate the symmetric logarithmic metric as a Hilbert projective metric, as shown in Theorem \ref{theorem characterization of log metric as Hilbert metric}.

\begin{proposition}
Let $X$ be a set. Then $\mathbb{R}^X_+$ is an almost Archimedean cone. In particular, every subcone of $\mathbb{R}^X_+$ with vertex is almost Archimedean. Hence every cone of nonnegative real-valued functions with vertex induces a genuine Hilbert projective metric on each of its parts.
\end{proposition}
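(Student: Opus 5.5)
First I will check that $\mathbb{R}^X_+$ is a cone with vertex in the real vector space $\mathbb{R}^X$ of all real-valued functions on $X$. It is convex, closed under multiplication by $\lambda>0$, and contains $0$. It is also pointed: if $f\in\mathbb{R}^X_+\cap(-\mathbb{R}^X_+)$, then $f(x)\ge 0$ and $-f(x)\ge 0$ for every $x$, so $f=0$. The induced order is the pointwise order, namely $f\preceq g$ if and only if $g(x)-f(x)\ge 0$ for every $x\in X$.

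Next comes the almost Archimedean property. Let $f\in\mathbb{R}^X$ and $u\in\mathbb{R}^X_+$, and suppose $-\varepsilon u\preceq f\preceq \varepsilon u$ for every $\varepsilon>0$. Fix $x\in X$. Then $|f(x)|\le \varepsilon u(x)$ for every $\varepsilon>0$. Since $u(x)$ is a finite real number, letting $\varepsilon\to 0$ gives $f(x)=0$. As $x$ was arbitrary, $f=0$. This is just the Archimedean property of $\mathbb{R}$ applied pointwise.

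For subcones, let $C\subset\mathbb{R}^X_+$ be a cone with vertex, and suppose $-\varepsilon u\preceq_C f\preceq_C\varepsilon u$ for all $\varepsilon>0$, with $u\in C$. The key point is that the order is taken relative to the cone, not relative to the ambient space, so I must pass from $C$ to $\mathbb{R}^X_+$. Since $\varepsilon u-f\in C\subset\mathbb{R}^X_+$ and $f+\varepsilon u\in C\subset\mathbb{R}^X_+$, the same inequalities hold for $\preceq_{\mathbb{R}^X_+}$. Moreover $u\in\mathbb{R}^X_+$. The previous step then gives $f=0$. This is the same monotonicity used in Proposition \ref{Proposition comparison of Hilbert seminorms wrt subsets}: enlarging the cone preserves order relations.

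Finally, the last sentence follows from the characterization recalled in the definition of Hilbert's metric. Almost Archimedean cones are exactly those for which $d_C$ is a genuine metric on each projectivized part $[C_p]$. I do not expect a real obstacle here. The only delicate point is ambient-space bookkeeping: $f$ ranges over the vector space generated by $C$, or over $\mathbb{R}^X$, and in either case the inequalities in $C$ transfer to $\mathbb{R}^X_+$.
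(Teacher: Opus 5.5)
Your proposal is correct. The paper states this proposition without proof, and your argument is the routine verification it leaves implicit: you apply the Archimedean property of $\mathbb{R}$ pointwise, then transfer $f+\varepsilon u\in C$ and $\varepsilon u-f\in C$ to $\mathbb{R}^X_+$ through $C\subset\mathbb{R}^X_+$.

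For the final sentence you rely, as the paper does, on the equivalence recalled in the definition of Hilbert's metric. If you want it self-contained, note that definiteness is the only metric axiom that needs the hypothesis, and it is immediate here. If $m_C(q/p)=M_C(q/p)=\mu$, then $-\varepsilon p\preceq_C q-\mu p\preceq_C\varepsilon p$ for every $\varepsilon>0$, so the almost Archimedean property gives $q=\mu p$.
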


\begin{theorem}\label{theorem characterization of log metric as Hilbert metric}
Let $X$ be a Banach space, let $C$ be a subcone of $\mathcal{N}(X)$, and let $D\subseteq \mathbb{R}^{X}_{+}$ be a cone with vertex such that $C\subseteq D$. Then the following statements are equivalent:

\begin{itemize}
\item[1)] The cone $C$ is contained in a single part of $D$, and the Hilbert projective metric induced by $D$ on $[C]$ coincides with the symmetric logarithmic metric; that is,

\begin{equation}
d_D([p],[q]) = \log\frac{M_{\log}(q/p)}{m_{\log}(q/p)}
\end{equation}

\noindent for every $p,q\in C$.

\item[2)] For every $p,q\in C$, one has
\begin{equation}
\beta p-q\in D
\end{equation}

\noindent whenever $\beta>M_{\log}(q/p)$, and

\begin{equation}
q-\alpha p\in D
\end{equation}

\noindent whenever $0<\alpha<m_{\log}(q/p)$.

\item[3)] For every $p,q\in C$ and $0 < \theta <1$ such that

\begin{equation}
q(x)\leq \theta p(x)
\end{equation}

\noindent for every $x\in X$, we have

\begin{equation}
p-q\in D.
\end{equation}

\item[(4)] The cone $D$ contains every uniformly positive difference of elements of $C$; that is,

\begin{equation}
\left\{
f\in C-C \;\middle|\;
\begin{array}{c}
\text{there exist }r\in C\text{ and }\varepsilon>0\text{ such that}\\[1mm]
f(x)\geq \varepsilon r(x)\text{ for every }x\in X
\end{array}
\right\}
\subset D.
\end{equation}
\end{itemize}
\end{theorem}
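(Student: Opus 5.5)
I will prove the cycle $1)\Rightarrow 2)\Rightarrow 3)\Rightarrow 4)\Rightarrow 2)\Rightarrow 1)$, working throughout with the chain of cones $C\subseteq D\subseteq \mathbb{R}^X_+$. The basic tools are Proposition~\ref{Proposition comparison of Hilbert seminorms wrt subsets}, applied to $D\subseteq\mathbb{R}^X_+$, and Proposition~\ref{proposition pointwise cone induces logarithmic metric}. Together they give, for $D$-comparable $p,q\in C$,
\begin{equation*}
M_D(q/p)\geq M_{\log}(q/p), \qquad m_D(q/p)\leq m_{\log}(q/p).
\end{equation*}

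\emph{Proof of $1)\Rightarrow 2)$.} Under 1), $p$ and $q$ lie in the same part of $D$, and $d_D=d_{\mathbb{R}^X_+}$. The rigidity clause of Proposition~\ref{Proposition comparison of Hilbert seminorms wrt subsets} then yields
\begin{equation*}
M_D(q/p)=M_{\log}(q/p), \qquad m_D(q/p)=m_{\log}(q/p).
\end{equation*}
Fix $\beta>M_{\log}(q/p)=M_D(q/p)$. By the definition of the infimum there is some $\beta'<\beta$ with $\beta' p-q\in D$. Since $(\beta-\beta')p\in C\subseteq D$ and $D$ is convex and closed under positive scaling (hence under sums), we get $\beta p-q\in D$. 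The case of $\alpha$ is symmetric.

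\emph{Proof of $2)\Rightarrow 3)$.} If $q\leq\theta p$ pointwise, then $M_{\log}(q/p)\leq\theta<1$. Applying 2) with $\beta=1$ gives $p-q\in D$.

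\emph{Proof of $3)\Rightarrow 4)$.} Let $f=p-q$ with $p,q\in C$, and suppose $f\geq\varepsilon r$ for some $r\in C$ and $\varepsilon>0$. The idea is to produce $\theta<1$ with $q\leq\theta p$. Since $r\in\mathcal{N}(X)$ is equivalent to $p$, there is $c>0$ with $r\geq c\,p$. Hence $p-q\geq\varepsilon c\, p$, that is, $q\leq(1-\varepsilon c)p$. Note that $\varepsilon c<1$ automatically, because $q>0$ off the origin. Setting $\theta=1-\varepsilon c$, 3) gives $f\in D$.

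\emph{Proof of $4)\Rightarrow 2)$.} Let $\beta>M_{\log}(q/p)$. Then
\begin{equation*}
\beta p-q\geq \bigl(\beta-M_{\log}(q/p)\bigr)p
\end{equation*}
pointwise. Moreover $\beta p\in C$ and $q\in C$, so $\beta p-q\in C-C$ is uniformly positive with $r=p$, and 4) puts it in $D$. Similarly, for $0<\alpha<m_{\log}(q/p)$,
\begin{equation*}
q-\alpha p\geq \bigl(m_{\log}(q/p)-\alpha\bigr)p,
\end{equation*}
so $q-\alpha p\in D$ as well.

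\emph{Proof of $2)\Rightarrow 1)$.} Condition 2), with $\alpha$ and $\beta$ chosen in the admissible ranges, shows that $p$ and $q$ are $D$-comparable, so $C$ lies in a single part of $D$. Taking the infimum over $\beta$ and the supremum over $\alpha$ gives
\begin{equation*}
M_D(q/p)\leq M_{\log}(q/p), \qquad m_D(q/p)\geq m_{\log}(q/p).
\end{equation*}
Combined with the reverse inequalities from the opening step, these are equalities, and so $d_D([p],[q])=d_{\log}([p],[q])$.

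The main obstacle is the step $3)\Rightarrow 4)$. There one must convert uniform positivity with respect to an arbitrary $r\in C$ into the strict contraction $q\leq\theta p$. This requires the equivalence of all norms in $C\subseteq\mathcal{N}(X)$, and one must check that the resulting $\theta$ lies strictly in $(0,1)$. Throughout, one must also take care that the sums used stay inside $D$; this follows from $D$ being a convex cone containing $C$.
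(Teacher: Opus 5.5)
Your proof is correct and follows essentially the same route as the paper: the rigidity clause of the comparison proposition for 1)$\Rightarrow$2), the choice $\beta=1$ for 2)$\Rightarrow$3), and norm equivalence giving the contraction $q\le(1-\varepsilon c)p$ for 3)$\Rightarrow$4). The only difference is that you split the paper's direct 4)$\Rightarrow$1) into 4)$\Rightarrow$2)$\Rightarrow$1), and you use $r=p$ for both bounds where the paper uses $r=q$ and the rescaled element $q-(1+\varepsilon)^{-1}\alpha p$; this is a harmless and slightly cleaner variation.
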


\begin{proof}
$1)$ implies $2)$. Let $p , q \in C$. Since $D \subset \mathbb{R}^X_+$ and the metrics coincide, by Proposition \ref{Proposition comparison of Hilbert seminorms wrt subsets} we have 

\begin{equation}
M_D(q/p) = M_{\log}(q/p), \qquad m_D(q/p) = m_{\log}(q/p).
\end{equation}

\noindent Let $\beta > M_{\log}(q/p)$ and $\alpha < m_{\log}(q /p)$. Since $M_D(q/p)= M_{\log}(q/p)$ is an infimum, there exists $\beta > \beta' \geq M_D(q/p)$ such that 

\begin{equation}
\beta' p - q \in D.
\end{equation}

\noindent Thus, $(\beta - \beta') p \in D$ implies 

\begin{equation}
\beta p - q = \beta' p - q + (\beta -\beta') p \in  D.
\end{equation}

\noindent A similar argument proves that $q - \alpha p  \in D$.

$2)$ implies $3)$. Let $p , q \in C$ and $0 < \theta < 1$ such that $q(x ) \leq \theta p(x)$ for every $x \in X$. By the definition of $M_{\log}(q/p)$ in Equation (\ref{definition of symmetric log values}), we have $M_{\log}(q/p) \leq \theta < 1$. Setting $\beta= 1$, we obtain $\beta > M_{\log}(q/p)$ and

\begin{equation}
p- q = \beta p - q \in D.
\end{equation}

$3)$ implies $4)$. Let $p,q, r \in C$ and $\varepsilon > 0$ such that

\begin{equation}
p(x) - q(x) \geq \varepsilon r(x)
\end{equation}

\noindent for each $x \in X$. In particular 

\begin{equation}\label{technicall positive difference}
p(x_0) - q(x_0) \geq \varepsilon r(x_0) > 0
\end{equation}

\noindent for any $x_0 \neq 0$. Since $p$ and $r$ are equivalent norms, there exists $ \theta'> 0$ such that

\begin{equation}
\theta' p(x) \leq \varepsilon r(x) 
\end{equation} 

\noindent for each $x \in X$. Since $q$ is nonnegative, $ p(x) - q(x) \leq p(x)$ for every $x \in X$. Hence   

\begin{equation}\label{technicall less than one difference}
\theta' p(x) \leq  p(x) - q(x) \leq p(x).
\end{equation}

\noindent Thus, (\ref{technicall positive difference})  and (\ref{technicall less than one difference}) imply $0 < \theta'< 1$ and 

\begin{equation}
q(x) \leq (1 - \theta')p(x) 
\end{equation}

\noindent for each $x \in X$. Then

\begin{equation}
p - q \in D.
\end{equation} 

$4)$ implies $1)$. Let $p,  q \in C$ and $\varepsilon > 0$. Since $p$ and $q$ are equivalent norms, there exist optimal 

\begin{equation}
\alpha = m_{\log}(q/p), \qquad \beta = M_{\log}(q/p)
\end{equation}
 
\noindent such that

\begin{equation}
\alpha p(x) \leq q(x) \leq \beta p(x)
\end{equation}

\noindent for each $x \in X$. Hence

\begin{equation}
\begin{split}
\beta p(x) - q(x) & \geq 0 \\
(\beta + \varepsilon) p(x) - q(x)  & \geq \varepsilon p(x)
\end{split}
\end{equation}

\noindent for each $x\in X$. Thus 

\begin{equation}\label{technical eq for each varepsilon comparison 1}
(\beta + \varepsilon) p - q \in D
\end{equation}

\noindent By a similar argument

\begin{equation}
(1 + \varepsilon)q(x) - \alpha p(x) \geq \varepsilon q (x)
\end{equation} 

\noindent for each $x \in X$ and

\begin{equation}\label{technical eq for each varepsilon comparison 2}
q - (1 + \varepsilon)^{-1} \alpha p \in D
\end{equation}

\noindent Then $p$ and $q$ are comparable under the cone order $\preceq_D$. Finally since (\ref{technical eq for each varepsilon comparison 1}) and (\ref{technical eq for each varepsilon comparison 2}) are valid for each $\varepsilon > 0$ it follows that

\begin{equation}
M_D(q/p) \leq M_{\log}(q/p) , \qquad m_D(q/p) \geq m_{\log}(q/p).
\end{equation}

\noindent That is, 

\begin{equation}
d_D([p],[q]) \leq d_{\log}([p],[q]).
\end{equation}

\noindent Using Proposition \ref{Proposition comparison of Hilbert seminorms wrt subsets} we conclude that

\begin{equation}
d_D([p] , [q]) = d_{\log}([p], [q])
\end{equation} 
\end{proof}

\begin{remark}
Let $X$ be a Banach space.

\begin{itemize}
\item[i)] The Hilbert projective metric induced by the cone $\mathcal{S}(X)$ of continuous seminorms does not coincide globally on $\mathcal{N}'(X)$ with the symmetric logarithmic metric. Indeed, two equivalent norms need not belong to the same part of $\mathcal{S}(X)$, since the pointwise comparison of two norms does not imply that their difference is a seminorm.

\item[ii)] The same phenomenon occurs for the intrinsic cone $\mathcal{N}(X) \cup \{0 \}$. In general, $\mathcal{N}(X)$ is not a single part with respect to its intrinsic order. Consequently, the Hilbert projective metric induced by $\mathcal{N}(X)$ is defined only on its parts and does not coincide globally on $\mathcal{N}'(X)$ with the symmetric logarithmic metric.

\item[iii)] Every equivalent norm on $X$ is a continuous Lipschitz function. Therefore, the Hilbert projective metrics induced on $\mathcal{N}'(X)$ by the positive cones of $C(X,\mathbb{R})$ and $\operatorname{Lip}(X,\mathbb{R})$ coincide with the symmetric logarithmic metric.
\end{itemize}
\end{remark}

\begin{example}
We present an example of two equivalent norms that do not belong to the same part of $\mathcal{S}(X)$ and, consequently, neither to the same part of $\mathcal{N}(X)$. 

Consider the following norms on $X = \mathbb{R}^{2}$.

\begin{equation}
p(a,b)=\|(a,b)\|_{1}= |a| + |b|, \qquad q(a,b)=\| (a,b)\|_{\infty} = \max \{|a|, |b| \}.
\end{equation}

For every $\alpha>0$, the function $q-\alpha p$ is not a seminorm. Indeed, taking $x=(1,1)$ and $y=(1,-1)$, one has

\begin{equation}
(q-\alpha p)(x+y)=2-2\alpha
\end{equation}

\noindent and

\begin{equation}
(q-\alpha p)(x)+(q-\alpha p)(y)=2-4\alpha.
\end{equation}

\noindent Hence

\begin{equation}
(q-\alpha p)(x+y) = 2 - 2 \alpha > 2 - 4 \alpha = (q-\alpha p)(x)+(q-\alpha p)(y),
\end{equation}

\noindent so the triangle inequality fails. Therefore, there is no $\alpha>0$ such that $\alpha p\preceq_{\mathcal{S}} q$. Thus, $p$ and $q$ belong to different parts of the cone of seminorms on $\mathbb{R}^2$, $\mathcal{S}(\mathbb{R}^{2})$.
\end{example}

\section{Intrinsic cone order on $\mathcal{N}(X)$}

In this section, we study the Hilbert metric induced by $\mathcal{N}(X)$ on its projectivization $\mathcal{N}'(X)$. For this purpose, we introduce the following definition.

\begin{definition}[Intrinsic Hilbert metric]
Let $C$ be a cone and set

\begin{equation}
C_0=C\cup\{0\}.
\end{equation}

\noindent We call the Hilbert projective metric induced by the cone order $\preceq_{C_0}$ on each projectivized part of $C$ the intrinsic Hilbert metric of $C$.
\end{definition}

As observed in the previous section, the intrinsic Hilbert metric on $\mathcal{N}(X)$ does not, in general, coincide with the symmetric logarithmic metric. The obstruction arises from the fact that differences of equivalent norms need not remain in $\mathcal{N}(X)$. In order to express this phenomenon in geometric terms and to describe the intrinsic Hilbert metric on $\mathcal{N}(X)$, we introduce the following notion.

\begin{definition}[Triangular defect]
Let $(X,p)$ be a Banach space. The triangular defect associated with $p$ is the function

\begin{equation}
\Delta_p:X\times X\longrightarrow\mathbb{R}
\end{equation}

\noindent defined by

\begin{equation}
\Delta_p(x,y)=p(x)+p(y)-p(x+y).
\end{equation}
\end{definition}

The triangular defect measures the amount by which the triangle inequality fails to be an equality. In particular, $\Delta_p(x,y)\geq 0$ for every $x,y\in X$, and $\Delta_p(x,y)=0$ precisely when equality holds in the triangle inequality for the pair $(x,y)$.

\begin{proposition}\label{Proposition properties of triangular defect}
Let $X$ be a Banach space and $p , q \in  \mathcal{N}(X)$. Then the triangular defect satisfies the following properties:

\begin{itemize}
\item[1)] For every $x,y\in X$ and $\lambda>0$,

\begin{equation}
\Delta_p(\lambda x,\lambda y)=\lambda\Delta_p(x,y) = \Delta_{\lambda p}(x,y)
\end{equation}

\noindent and

\begin{equation}
\Delta_{p + q}(x , y) = \Delta_p(x,y) + \Delta_q(x, y).
\end{equation}

\item[2)] For every nonzero $x,y\in X$,
\begin{equation}
\Delta_p(x,y)=0
\end{equation}

\noindent if and only if

\begin{equation}
\Delta_p\left(\frac{x}{p(x)},\frac{y}{p(y)}\right)=0.
\end{equation}

\item[3)] For every nonzero $x,y\in X$ and every $\alpha,\beta>0$,

\begin{equation}
\Delta_p(x,y)>0
\end{equation}

\noindent if and only if

\begin{equation}
\Delta_p(\alpha x,\beta y)>0.
\end{equation}
\end{itemize}
\end{proposition}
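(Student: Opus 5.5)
The three items follow by direct computation from the definition $\Delta_p(x,y)=p(x)+p(y)-p(x+y)$, using only two facts about $p\in\mathcal{N}(X)$: positive homogeneity, $p(\lambda z)=\lambda p(z)$ for $\lambda>0$, and $p(z)>0$ for $z\neq 0$. I will treat them in order; items 2) and 3) both rest on one preliminary observation, namely that vanishing of the triangular defect at nonzero vectors depends only on the rays through $x$ and $y$.

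For 1), homogeneity gives
\[
\Delta_p(\lambda x,\lambda y)=\lambda p(x)+\lambda p(y)-\lambda p(x+y)=\lambda\Delta_p(x,y).
\]
Since $(\lambda p)(z)=\lambda p(z)$, the same expression equals $\Delta_{\lambda p}(x,y)$. Additivity in the norm follows by expanding $(p+q)(z)=p(z)+q(z)$ in each of the three terms and regrouping. We already know $\lambda p$ and $p+q$ lie in $\mathcal{N}(X)$, so $\Delta_{\lambda p}$ and $\Delta_{p+q}$ are defined.

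The key step is the following lemma: for nonzero $x,y$, $\Delta_p(x,y)=0$ if and only if $\Delta_p(\alpha x,\beta y)=0$ for all $\alpha,\beta>0$. One direction is trivial (take $\alpha=\beta=1$). For the other, suppose $p(x+y)=p(x)+p(y)$. By item 1) we may assume $\alpha\geq\beta$, so we need to show $p(\alpha x+\beta y)=\alpha p(x)+\beta p(y)$. Write
\[
\alpha x+\beta y=\beta(x+y)+(\alpha-\beta)x .
\]
The triangle inequality then gives
\[
p(\alpha x+\beta y)\le \beta p(x+y)+(\alpha-\beta)p(x)=\alpha p(x)+\beta p(y).
\]
For the reverse inequality, write $\beta(x+y)=(\alpha x+\beta y)-(\alpha-\beta)x$. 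This gives
\[
\beta p(x)+\beta p(y)=\beta p(x+y)\le p(\alpha x+\beta y)+(\alpha-\beta)p(x),
\]
which rearranges to $p(\alpha x+\beta y)\geq \alpha p(x)+\beta p(y)$. Hence equality holds. This is the classical fact that, once equality holds in the triangle inequality for one pair, it holds along the whole positive cone spanned by the two rays. The case $\alpha<\beta$ is symmetric, with the roles of $x$ and $y$ exchanged.

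Item 2) is the lemma with $\alpha=1/p(x)$ and $\beta=1/p(y)$, which are positive because $x,y\neq 0$ and $p$ is a norm. Item 3) is the contrapositive of the lemma, combined with $\Delta_p\geq 0$: strict positivity is exactly non-vanishing. Applying the lemma in both directions, the second time to the vectors $\alpha x,\beta y$ with scalars $1/\alpha,1/\beta$, gives the stated equivalence.

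The only real obstacle is the reverse inequality in the lemma. It needs the decomposition to be arranged so that the coefficient $\alpha-\beta$ is nonnegative, which is why I reduce to $\alpha\ge\beta$ first. Everything else is bookkeeping.
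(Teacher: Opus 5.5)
Your overall plan is sound: show that, for nonzero $x,y$, the vanishing of $\Delta_p(x,y)$ is invariant under independent positive rescalings of $x$ and $y$, then read off items 2) and 3). However, the one nontrivial step fails as written. From $\beta(x+y)=(\alpha x+\beta y)-(\alpha-\beta)x$ the triangle inequality gives
\[
\beta p(x)+\beta p(y)\le p(\alpha x+\beta y)+(\alpha-\beta)p(x).
\]
This rearranges to $p(\alpha x+\beta y)\ge(2\beta-\alpha)p(x)+\beta p(y)$, not to $p(\alpha x+\beta y)\ge\alpha p(x)+\beta p(y)$. The bound falls short of the target by $2(\alpha-\beta)p(x)$. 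So it is insufficient exactly when $\alpha>\beta$, which is the only case not already covered by item 1). For $\alpha=2$, $\beta=1$ it only says $p(2x+y)\ge p(y)$.

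You correctly flagged this as the crux, but making $\alpha-\beta\ge 0$ is not enough. Isolating $\alpha x+\beta y$ by \emph{subtracting} a vector applies the triangle inequality in the direction that puts the correction term on the wrong side. To bound $p(\alpha x+\beta y)$ from below, you must write a vector on which $p$ is known exactly (here a multiple of $x+y$) as a \emph{nonnegative} combination of $\alpha x+\beta y$ and a vector on one of the rays. For $\alpha\ge\beta$ the right identity is $\alpha(x+y)=(\alpha x+\beta y)+(\alpha-\beta)y$, which gives
\[
\alpha p(x)+\alpha p(y)=\alpha p(x+y)\le p(\alpha x+\beta y)+(\alpha-\beta)p(y),
\]
that is, $p(\alpha x+\beta y)\ge\alpha p(x)+\beta p(y)$. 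The case $\alpha<\beta$ follows by exchanging $x$ and $y$. That reduction comes from the symmetry $\Delta_p(x,y)=\Delta_p(y,x)$, not from item 1) as you wrote.

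With this repair your argument is correct and somewhat more economical than the paper's. The paper proves item 2) directly. It sets $u=x/p(x)$, $v=y/p(y)$, $t=p(x)/(p(x)+p(y))$ and shows $p(tu+(1-t)v)=1$ if and only if $p((u+v)/2)=1$. It does this with exactly this kind of nonnegative decomposition, split into the cases $t\le 1/2$ and $t\ge 1/2$. It then deduces item 3) from item 2) via $\alpha x/p(\alpha x)=x/p(x)$. Your ray lemma handles arbitrary $\alpha,\beta>0$ at once, gives items 2) and 3) together, and is precisely the content of item iii) of the remark following the proposition. One small point: the converse direction of item 2) needs the lemma applied to $x/p(x)$, $y/p(y)$ with scalars $p(x)$, $p(y)$, just as you already do for item 3).
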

\begin{proof}
$1)$ follows directly from the definition.

For $2)$, the collinear cases are immediate. Let $x,y\in X\setminus \{0 \}$ be noncollinear. Set $a=p(x)$, $b=p(y)$, $u=x/a$, $v=y/b$, and $t=a/(a+b)$. Suppose first that $\Delta_p(x,y)=0$. Then $p(au+bv)=a+b$, and hence $p(tu+(1-t)v)=1$. It is enough to prove that $p((u+v)/2)=1$. By convexity, $p((u+v)/2)\leq 1$.

\noindent If $t\leq 1/2$, then

\begin{equation}
tu+(1-t)v=2t\frac{u+v}{2}+(1-2t)v.
\end{equation}

\noindent Therefore,

\begin{equation}
1\leq 2t p\left(\frac{u+v}{2}\right)+(1-2t),
\end{equation}

\noindent and consequently $p((u+v)/2)\geq 1$. If $t\geq 1/2$, then

\begin{equation}
tu+(1-t)v=2(1-t)\frac{u+v}{2}+(2t-1)u,
\end{equation}

\noindent  and the same argument gives $p((u+v)/2)\geq 1$. Thus $p((u+v)/2)=1$, and therefore $\Delta_p(u,v)=0$.

Conversely, suppose that $\Delta_p(u,v)=0$, equivalently $p((u+v)/2)=1$. Since $tu+(1-t)v$ is a convex combination of $u$ and $v$, one has $p(tu+(1-t)v)\leq 1$. If $t\leq 1/2$, then

\begin{equation}
\frac{u+v}{2}=\frac{1}{2(1-t)}\big(tu+(1-t)v\big)+\frac{1-2t}{2(1-t)}u,
\end{equation}

\noindent and hence $p(tu+(1-t)v)\geq 1$. If $t\geq 1/2$, then

\begin{equation}
\frac{u+v}{2}=\frac{1}{2t}\big(tu+(1-t)v\big)+\frac{2t-1}{2t}v,
\end{equation}

\noindent and again $p(tu+(1-t)v)\geq 1$. Therefore $p(tu+(1-t)v)=1$, and

\begin{equation}
p(x+y)=p(au+bv)=(a+b)p(tu+(1-t)v)=a+b=p(x)+p(y).
\end{equation}

\noindent Thus $\Delta_p(x,y)=0$.

Finally, $3)$ follows from $2)$. Indeed, for $\alpha,\beta>0$,

\begin{equation}
\frac{\alpha x}{p(\alpha x)}=\frac{x}{p(x)}, \qquad \frac{\beta y}{p(\beta y)}=\frac{y}{p(y)}.
\end{equation}

\noindent Hence $\Delta_p(\alpha x,\beta y)=0$ if and only if $\Delta_p(x,y)=0$. Since the triangular defect is always nonnegative, this is equivalent to

\begin{equation}
\Delta_p(\alpha x,\beta y)>0
\end{equation}

\noindent if and only if $\Delta_p(x,y)>0$.
\end{proof}

\begin{remark}\label{remark properties of triangular defect}
The previous proposition gives a geometric interpretation of the triangular defect.

\begin{itemize}
\item[i)] By Item $2)$ of Proposition \ref{Proposition properties of triangular defect}, for nonzero $x,y\in X$, the condition $\Delta_p(x,y)=0$ depends only on the normalized vectors $x/p(x)$ and $y/p(y)$. More precisely,

\begin{equation}
\Delta_p(x,y)=0
\end{equation}

\noindent if and only if the segment

\begin{equation}
\left[\frac{x}{p(x)},\frac{y}{p(y)}\right]
\end{equation}

\noindent is contained in the unit sphere of $(X,p)$. Equivalently, there exists a supporting functional $x^*\in X^*$ with dual norm one with respect to $p$, $\|x^*\|_{p*}=1$, such that

\begin{equation}
x^*\left(\frac{x}{p(x)}\right)=x^*\left(\frac{y}{p(y)}\right)=1.
\end{equation}

\noindent Thus, $\Delta_p(x,y)>0$ precisely when the portion of the unit sphere joining the two normalized directions is not flat.

\item[ii)] By Item $3)$ of Proposition \ref{Proposition properties of triangular defect}, the qualitative behavior of the triangular defect is invariant under independent positive rescaling of its arguments. In particular,

\begin{equation}
\Delta_p(x,y)>0
\end{equation}

\noindent if and only if

\begin{equation}
\Delta_p(\alpha x,\beta y)>0
\end{equation}

\noindent for every $\alpha,\beta>0$. Hence, from a qualitative point of view, the triangular defect is a property of the positive rays generated by $x$ and $y$, rather than of the particular representatives chosen on those rays.

\item[iii)] Moreover, for nonzero $x,y\in X$, the condition $\Delta_p(x,y)=0$ is equivalent to the linearity of $p$ on the cone generated by $x$ and $y$. More precisely,
\begin{equation}
\Delta_p(x,y)=0
\end{equation}

\noindent if and only if, for every $a,b>0$,

\begin{equation}
p(ax+by)=a p(x)+b p(y).
\end{equation}

\noindent Equivalently, in terms of the normalized vectors,

\begin{equation}
p\left(a\frac{x}{p(x)}+b\frac{y}{p(y)}\right)=a+b
\end{equation}

\noindent for every $a,b>0$. Thus, the vanishing of the triangular defect is an intrinsic property of the cone generated by $x$ and $y$.
\end{itemize}
\end{remark}

The following technical lemma characterizes when a difference of two equivalent norms is again an equivalent norm.

\begin{lemma}\label{lemma characterization difference norms triangular defect}
Let $X$ be a Banach space, $p,q\in\mathcal{N}(X)$, and $\lambda,\theta>0$. Then the following statements are equivalent:

\begin{itemize}
\item[1)] $\lambda p-\theta q\in\mathcal{N}(X)$.

\item[2)] There exists $\varepsilon>0$ such that

\begin{equation}
\lambda p(x)-\theta q(x)\geq\varepsilon p(x)
\end{equation}

\noindent for every $x\in X$, and

\begin{equation}
\lambda\Delta_p(x,y)\geq\theta\Delta_q(x,y)
\end{equation}

\noindent for every $x,y\in X$.
\end{itemize}
\end{lemma}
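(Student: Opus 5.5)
We prove the two implications separately. The idea is that $r:=\lambda p-\theta q$ is automatically absolutely homogeneous and vanishes at $0$. Hence membership of $r$ in $\mathcal{N}(X)$ reduces to two conditions: subadditivity and uniform positivity relative to $p$. The first is exactly the triangular defect inequality, and the second is exactly the $\varepsilon$-lower bound. Two preliminary observations carry the whole argument. First, by linearity of the defect in the norm (Item 1) of Proposition \ref{Proposition properties of triangular defect}, extended formally to differences since $\Delta$ is defined pointwise),
\begin{equation}
\Delta_{r}(x,y)=r(x)+r(y)-r(x+y)=\lambda\Delta_p(x,y)-\theta\Delta_q(x,y).
\end{equation}
Second, $r(\mu x)=|\mu| r(x)$ for every scalar $\mu$, since both $p$ and $q$ are absolutely homogeneous.

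For $1)\Rightarrow 2)$, assume $r\in\mathcal{N}(X)$. Since $r$ satisfies the triangle inequality, $\Delta_r\geq 0$, and the displayed identity gives $\lambda\Delta_p\geq\theta\Delta_q$. Since $r$ is a norm equivalent to $p$, there is $\varepsilon>0$ with $r(x)\geq\varepsilon p(x)$ for all $x$, which is precisely the first inequality in $2)$.

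For $2)\Rightarrow 1)$, the inequality $\lambda\Delta_p\geq\theta\Delta_q$ together with the identity yields $\Delta_r\geq 0$, i.e. $r(x+y)\leq r(x)+r(y)$. Absolute homogeneity was observed above. The bound $r\geq\varepsilon p$ gives nonnegativity and $r(x)>0$ for $x\neq 0$, so $r$ is a norm. For equivalence with $p$, the lower bound is $\varepsilon p\leq r$, and the upper bound is $r\leq\lambda p$ because $q\geq 0$. Hence $r\in\mathcal{N}(X)$.

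There is no substantial obstacle. The only point requiring care is to notice that the defect identity for $r$ holds even though $r$ is not yet known to be a norm, since it is a purely algebraic consequence of the definition of $\Delta$. One should also note that the equivalence of $r$ with $p$ forces $\varepsilon$ to be uniform; this is why the condition in $2)$ is stated with a fixed $\varepsilon$ rather than as mere pointwise positivity.
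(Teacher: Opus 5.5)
Your proposal is correct and follows essentially the same argument as the paper: both set $r=\lambda p-\theta q$, use the identity $\Delta_r=\lambda\Delta_p-\theta\Delta_q$ to turn subadditivity of $r$ into the defect inequality, and use $\varepsilon p\le r\le\lambda p$ for definiteness and equivalence. You also state explicitly that this identity is purely algebraic and holds before $r$ is known to be a norm, which the paper's proof uses without comment.
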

\begin{proof}
$1)$ implies $2)$. Since $\lambda p-\theta q\in\mathcal{N}(X)$, there exists $\varepsilon>0$ such that

\begin{equation}
\lambda p(x)-\theta q(x)\geq\varepsilon p(x)
\end{equation}

\noindent for every $x\in X$. Moreover, $\lambda p-\theta q$ satisfies the triangle inequality, and therefore

\begin{equation}
\lambda p(x+y)-\theta q(x+y)\leq \lambda p(x)-\theta q(x)+\lambda p(y)-\theta q(y).
\end{equation}

\noindent Equivalently,

\begin{equation}
\begin{split}
\lambda\big(p(x)+p(y)-p(x+y)\big) &\geq\theta\big(q(x)+q(y)-q(x+y)\big) \\
\lambda\Delta_p(x,y)&\geq\theta\Delta_q(x,y).
\end{split}
\end{equation}

$2)$ implies $1)$. Set $r=\lambda p-\theta q$. By assumption, there exists $\varepsilon>0$ such that
\begin{equation}
r(x)\geq\varepsilon p(x)
\end{equation}

\noindent for every $x\in X$. Hence $r(x)=0$ if and only if $x=0$. Moreover, $r$ is absolutely homogeneous since $p,q\in\mathcal{N}(X)$, and

\begin{equation}
\varepsilon p(x)\leq r(x)\leq\lambda p(x)
\end{equation}

\noindent for every $x \in X$. Thus, it remains only to prove the triangle inequality. By hypothesis,

\begin{equation}
\Delta_r(x,y)=\lambda\Delta_p(x,y)-\theta\Delta_q(x,y)\geq0
\end{equation}

\noindent for every $x,y\in X$. Therefore,

\begin{equation}
r(x+y)\leq r(x)+r(y).
\end{equation}

\noindent Consequently, $\lambda p-\theta q = r \in\mathcal{N}(X)$.
\end{proof}

The following theorem characterizes the parts of the intrinsic order of $\mathcal{N}(X)$ in terms of uniform comparison between triangular defects.

\begin{theorem}\label{theorem intrinsic parts triangular defect}
Let $X$ be a Banach space and let $p,q\in\mathcal{N}(X)$. Then the following statements are equivalent:

\begin{itemize}
\item[1)] The norms $p$ and $q$ belong to the same intrinsic part of $\mathcal{N}(X)$.

\item[2)] There exist $a,b>0$ such that

\begin{equation}
a\Delta_p(x,y)\leq\Delta_q(x,y)\leq b\Delta_p(x,y)
\end{equation}

\noindent for every $x,y\in X$.
\end{itemize}
\end{theorem}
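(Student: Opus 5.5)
The plan is to unwind the definition of the intrinsic order $\preceq_{\mathcal{N}(X)\cup\{0\}}$ and then apply Lemma \ref{lemma characterization difference norms triangular defect} in each direction. By definition, $p$ and $q$ lie in the same intrinsic part exactly when there are $\alpha,\beta>0$ with
\begin{equation*}
q-\alpha p\in\mathcal{N}(X)\cup\{0\}, \qquad \beta p-q\in\mathcal{N}(X)\cup\{0\}.
\end{equation*}
The lemma translates membership of such differences in $\mathcal{N}(X)$ into two conditions: a uniform pointwise lower bound, and a comparison of triangular defects. The pointwise condition is always achievable for equivalent norms by adjusting the constants. So only the defect comparison carries real content, and that is precisely statement 2).

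For $1)\Rightarrow 2)$, take $\alpha,\beta$ as above. If $q-\alpha p\in\mathcal{N}(X)$, Lemma \ref{lemma characterization difference norms triangular defect} with $(\lambda,\theta)=(1,\alpha)$ and the roles of $p,q$ swapped gives $\Delta_q\geq\alpha\Delta_p$. If instead $q-\alpha p=0$, then $q=\alpha p$, and Proposition \ref{Proposition properties of triangular defect} 1) gives $\Delta_q=\alpha\Delta_p$. Symmetrically, $\beta p-q\in\mathcal{N}(X)\cup\{0\}$ yields $\beta\Delta_p\geq\Delta_q$. Hence 2) holds with $a=\alpha$ and $b=\beta$. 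Alternatively, one can note directly that any element $r$ of $\mathcal{N}(X)\cup\{0\}$ satisfies $\Delta_r\geq 0$, and use the additivity in Proposition \ref{Proposition properties of triangular defect} 1).

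For $2)\Rightarrow 1)$, let $l=m_{\log}(q/p)$ and $u=M_{\log}(q/p)$, and choose
\begin{equation*}
\alpha=\min\{a,\,l/2\}, \qquad \beta=\max\{b,\,2u\}.
\end{equation*}
Then $\alpha p(x)\leq \tfrac{l}{2}p(x)\leq\tfrac12 q(x)$, so $q(x)-\alpha p(x)\geq\tfrac12 q(x)$ for all $x$. Also $\alpha\Delta_p\leq a\Delta_p\leq\Delta_q$. Lemma \ref{lemma characterization difference norms triangular defect}, applied to the pair $(q,p)$ with $\lambda=1$, $\theta=\alpha$, $\varepsilon=1/2$, gives $q-\alpha p\in\mathcal{N}(X)$. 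Likewise, $\beta p(x)-q(x)\geq\beta p(x)-u\,p(x)\geq\tfrac{\beta}{2}p(x)$ and $\beta\Delta_p\geq b\Delta_p\geq\Delta_q$. The lemma with $\lambda=\beta$, $\theta=1$, $\varepsilon=\beta/2$ then gives $\beta p-q\in\mathcal{N}(X)$. Therefore $\alpha p\preceq q\preceq\beta p$, which is 1).

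I do not expect a serious obstacle. The only points needing care are the following.
\begin{itemize}
\item The degenerate case in which a difference equals $0$ rather than a norm has to be handled separately in $1)\Rightarrow 2)$.
\item In $2)\Rightarrow 1)$, $\alpha$ must be shrunk and $\beta$ enlarged beyond $a$ and $b$, so that the uniform positivity hypothesis of the lemma holds strictly and not merely the weak inequality $l p\leq q$.
\item The lemma's positivity condition is stated relative to the positive term, so when applying it to $q-\alpha p$ the roles of $p$ and $q$ must be swapped. This is harmless, since $p$ and $q$ are equivalent.
\end{itemize}
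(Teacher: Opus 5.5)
Your proposal is correct and follows the paper's proof: both directions reduce to the lemma characterizing when $\lambda p-\theta q\in\mathcal{N}(X)$, with $\alpha$ taken below $\min\{l,a\}$ and $\beta$ above $\max\{u,b\}$ (or at least $b$). You are slightly more careful than the paper in two places: you treat the degenerate case where a difference equals $0$ (the paper tacitly assumes both differences lie in $\mathcal{N}(X)$, which is harmless since the constants can always be shrunk or enlarged), and you verify the lemma's positivity condition relative to $q$ when applying it to $q-\alpha p$.
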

\begin{proof}
$1)$ implies $2)$. Since $p$ and $q$ belong to the same intrinsic part of $\mathcal{N}(X)$, there exist $a,b>0$ such that

\begin{equation}
bp-q\in\mathcal{N}(X), \qquad q-ap\in\mathcal{N}(X).
\end{equation}

\noindent By the triangle inequality applied to both norms and Lemma \ref{lemma characterization difference norms triangular defect}, it follows that

\begin{equation}
b\Delta_p(x,y)\geq\Delta_q(x,y), \qquad \Delta_q(x,y)\geq a\Delta_p(x,y)
\end{equation}

\noindent for every $x,y\in X$. Hence

\begin{equation}
a\Delta_p(x,y)\leq\Delta_q(x,y)\leq b\Delta_p(x,y)
\end{equation}

\noindent for every $x,y\in X$.

$2)$ implies $1)$. Since $p$ and $q$ are equivalent norms, there exist $u\geq l>0$ such that

\begin{equation}
lp(x)\leq q(x)\leq up(x)
\end{equation}

\noindent for every $x\in X$. By hypothesis, there exist $a,b>0$ such that

\begin{equation}
a\Delta_p(x,y)\leq\Delta_q(x,y)\leq b\Delta_p(x,y)
\end{equation}

\noindent for every $x,y\in X$. Choose $\beta>\max\{u,b\}$. Then

\begin{equation}
\beta p(x)-q(x)\geq(\beta-u)p(x)>0
\end{equation}

\noindent for every $x\neq0$, and

\begin{equation}
\beta\Delta_p(x,y)\geq\Delta_q(x,y).
\end{equation}

\noindent Therefore, by Lemma \ref{lemma characterization difference norms triangular defect},

\begin{equation}
\beta p-q\in\mathcal{N}(X).
\end{equation}

\noindent Similarly, choose $0<\alpha<\min \{l,a\}$. Then

\begin{equation}
q(x)-\alpha p(x)\geq(l-\alpha)p(x)>0
\end{equation}

\noindent for every $x\neq0$, and

\begin{equation}
\Delta_q(x,y)\geq\alpha\Delta_p(x,y).
\end{equation}

\noindent Hence, again by Lemma \ref{lemma characterization difference norms triangular defect},

\begin{equation}
q-\alpha p\in\mathcal{N}(X).
\end{equation}

\noindent Consequently, $p$ and $q$ belong to the same intrinsic part of $\mathcal{N}(X)$.
\end{proof}

\begin{corollary}\label{corollary intrinsic parts same triangular support}
Let $X$ be a Banach space and $p,q\in\mathcal{N}(X)$. If $p$ and $q$ belong to the same intrinsic part of $\mathcal{N}(X)$, then their triangular defects have the same zero set; that is, for any $x,y\in X$

\begin{equation}
\Delta_p(x,y)=0
\end{equation}

\noindent if and only if

\begin{equation}
\Delta_q(x,y)=0
\end{equation}
\end{corollary}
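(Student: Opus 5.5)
The plan is to deduce the corollary directly from Theorem \ref{theorem intrinsic parts triangular defect}; no new construction is needed. Assume that $p$ and $q$ lie in the same intrinsic part of $\mathcal{N}(X)$. By the implication $1)\Rightarrow 2)$ of that theorem, there are constants $a,b>0$ such that
\begin{equation*}
a\Delta_p(x,y)\leq\Delta_q(x,y)\leq b\Delta_p(x,y)
\end{equation*}
for every $x,y\in X$. We fix such a pair $(x,y)$ and compare the zero sets pointwise.

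First, suppose $\Delta_p(x,y)=0$. The upper bound gives $\Delta_q(x,y)\leq b\cdot 0=0$. The triangle inequality for $q$ gives $\Delta_q(x,y)\geq 0$. Together these force $\Delta_q(x,y)=0$.

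Conversely, suppose $\Delta_q(x,y)=0$. The lower bound gives $a\Delta_p(x,y)\leq 0$, and since $a>0$ this yields $\Delta_p(x,y)\leq 0$. Nonnegativity of $\Delta_p$ then gives $\Delta_p(x,y)=0$. Alternatively, one can simply exchange the roles of $p$ and $q$, since being in the same part is a symmetric relation.

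There is no real obstacle. The only points to check are that the constants supplied by the theorem are strictly positive, and that the triangular defect of any norm is nonnegative, as noted right after its definition. The two-sided uniform comparison then transfers vanishing in both directions. If desired, one can add a remark with Proposition \ref{Proposition properties of triangular defect} and Remark \ref{remark properties of triangular defect}: the two norms have exactly the same flat segments on their unit spheres, in the sense of normalized directions. This interpretation is not needed for the proof.
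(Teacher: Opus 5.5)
Your proof is correct and is the argument the paper intends. The paper states the corollary without a separate proof, as an immediate consequence of the implication $1)\Rightarrow 2)$ in Theorem~\ref{theorem intrinsic parts triangular defect} together with nonnegativity of triangular defects, so the two-sided bound with $a,b>0$ transfers vanishing in both directions exactly as you describe.
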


The converse of Corollary \ref{corollary intrinsic parts same triangular support} does not hold, even in finite-dimensional spaces. In particular, two equivalent norms may have triangular defects with the same zero set while belonging to different intrinsic parts of $\mathcal{N}(X)$, as shown by the following example.

\begin{example}\label{example same zero set different intrinsic parts}
Consider $X = \mathbb{R}^2$ endowed with the equivalent norms

\begin{equation}
p(a,b)=\|(a,b)\|_2= \sqrt{a^2 + b^2}, \qquad  q(a,b)=\|(a,b)\|_4= \sqrt[4]{a^4 + b^4}.
\end{equation}

\noindent Since both $p$ and $q$ are strictly convex, Item $i)$ of Remark \ref{remark properties of triangular defect} implies that their triangular defects have the same zero set, consisting of pairs of vectors lying in the same positive ray. We show, however, that $p$ and $q$ do not belong to the same intrinsic part of $\mathcal{N}(\mathbb{R}^2)$.

Let $x=(1,0)$ and, for every $t>0$, let $x_t=(1,t)$. Since $x$ and $x_t$ are not collinear, strict convexity implies

\begin{equation}
\begin{split}
\Delta_p(x,x_t)  & = p(x)+p(x_t)-p(x+x_t) \\
                 & = 1+\sqrt{1+t^2}-\sqrt{4+t^2} \\
                 & > 0
\end{split}
\end{equation}

\noindent and 

\begin{equation}
\begin{split}
\Delta_q(x,x_t) & = q(x)+q(x_t)-q(x+x_t) \\
                & = 1+ \sqrt[4]{1+t^4}- \sqrt[4]{16+t^4}\\
                & >0.
\end{split}
\end{equation}

\noindent Then, using the Taylor expansion $(1+t)^\alpha=\sum_{n=0}^{\infty}\binom{\alpha}{n}t^n$ around $t=0$, we obtain

\begin{equation}
\Delta_p(x,x_t)=\frac{1}{4}t^2+O(t^4)
\end{equation}

\noindent and

\begin{equation}
\Delta_q(x,x_t)=\frac{7}{32}t^4+O(t^8).
\end{equation}

\noindent Therefore,

\begin{equation}
\lim_{t\to 0^+}\frac{\Delta_q(x,x_t)}{\Delta_p(x,x_t)}=0.
\end{equation}

\noindent Hence there is no constant $a>0$ such that

\begin{equation}
a\Delta_p(x,y)\leq\Delta_q(x,y)
\end{equation}

\noindent for every $x,y\in\mathbb{R}^2$. By Theorem \ref{theorem intrinsic parts triangular defect}, $p$ and $q$ do not belong to the same intrinsic part of $\mathcal{N}(\mathbb{R}^2)$. Thus, even in finite dimension, two equivalent norms may have triangular defects with the same zero set while belonging to different intrinsic parts.
\end{example}

Although equality of the zero sets of the triangular defects is not sufficient for two norms to belong to the same intrinsic part of $\mathcal N(X)$, in finite dimensions it is enough to compare the triangular defects locally near their common zero set, as shown in the following proposition.

\begin{proposition}\label{proposition finite dimensional characterization intrinsic parts}
Let $X$ be a finite-dimensional Banach space, $p,q\in\mathcal{N}(X)$ and

\begin{equation}
D=\{(x,y)\in X^2\mid p(x)+p(y)=1\}.
\end{equation}

\noindent Then the following statements are equivalent:

\begin{itemize}
\item[1)] The norms $p$ and $q$ belong to the same intrinsic part of $\mathcal{N}(X)$.

\item[2)] $\Delta_p$ and $\Delta_q$ have the same zero set $Z$, and for every $z\in Z \cap D$ there exists a relative neighborhood $U_z$ of $z$ in $D$ and constants $a_z, b_z > 0 $ such that

\begin{equation}
a_z \Delta_p (x , y) \leq \Delta_q (x,y) \leq b_z \Delta_p(x,y)
\end{equation}

\noindent for every $(x , y) \in U_z$.
\end{itemize}
\end{proposition}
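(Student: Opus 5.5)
The direction 1) $\Rightarrow$ 2) is immediate. By Theorem \ref{theorem intrinsic parts triangular defect} there are global constants $a,b>0$ with $a\Delta_p\leq\Delta_q\leq b\Delta_p$ on $X\times X$. Corollary \ref{corollary intrinsic parts same triangular support} then gives equality of the zero sets. For every $z\in Z\cap D$ we may take $U_z=D$, $a_z=a$ and $b_z=b$.

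For 2) $\Rightarrow$ 1) we reduce, again via Theorem \ref{theorem intrinsic parts triangular defect}, to producing global constants $a,b>0$ with $a\Delta_p\leq\Delta_q\leq b\Delta_p$ on all of $X\times X$.

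\textbf{Homogeneity.} By item 1) of Proposition \ref{Proposition properties of triangular defect}, both $\Delta_p$ and $\Delta_q$ are positively homogeneous of degree one on $X\times X$. Every $(x,y)\neq(0,0)$ can be written as $\lambda(x',y')$ with $\lambda=p(x)+p(y)>0$ and $(x',y')\in D$. At $(0,0)$ both defects vanish. So it suffices to prove the two inequalities on $D$.

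\textbf{Compactness.} Since $X$ is finite-dimensional, $D$ is closed and bounded in $X^2$, hence compact. Both $\Delta_p$ and $\Delta_q$ are continuous, since norms on finite-dimensional spaces are continuous. The set $Z\cap D$ is a closed subset of $D$, hence compact. We cover $Z\cap D$ by the relatively open sets $U_z$ given in 2) and extract a finite subcover $U_{z_1},\dots,U_{z_n}$. On $W=\bigcup_i U_{z_i}$ the inequalities hold with $a'=\min_i a_{z_i}$ and $b'=\max_i b_{z_i}$.

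\textbf{The complement.} The set $K=D\setminus W$ is compact and disjoint from $Z$, so $\Delta_p$ and $\Delta_q$ are strictly positive on $K$ (the zero sets coincide and equal $Z$). The ratio $\Delta_q/\Delta_p$ is therefore continuous and positive on $K$, and it attains a positive minimum $a''$ and a finite maximum $b''$ there (if $K$ is empty, nothing needs to be done). Taking $a=\min\{a',a''\}$ and $b=\max\{b',b''\}$ gives the inequalities on all of $D$, hence on $X\times X$ by homogeneity. Theorem \ref{theorem intrinsic parts triangular defect} then concludes.

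\textbf{Main obstacle.} The delicate point is the bookkeeping of the neighborhoods: they must be taken relatively open in $D$, so that their union is open and $K$ is compact. The other thing to check is that $D$ really is compact, which uses finite dimensionality through the equivalence of $p$ with the product norm on $X^2$. Apart from this, the argument is a standard compactness patching of local comparisons. The hypothesis that the zero sets coincide is used only on $K$, to guarantee that both defects are positive there.
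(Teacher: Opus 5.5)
Your proof is correct and follows the paper's argument essentially step for step: you reduce to the compact normalized set $D$ by homogeneity, cover the compact set $Z\cap D$ by finitely many of the given neighborhoods, and bound the continuous positive ratio $\Delta_q/\Delta_p$ on the compact complement. You also note that the neighborhoods should be replaced by relatively open ones (e.g.\ their relative interiors in $D$) so that the complement is compact, a small point the paper leaves implicit.
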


\begin{proof}
$1)$ implies $2)$ immediately from Theorem \ref{theorem intrinsic parts triangular defect} and Corollary \ref{corollary intrinsic parts same triangular support}.

We prove the converse. By Proposition \ref{Proposition properties of triangular defect}, the triangular defects are positively homogeneous. Therefore, it is enough to study them on the normalized set $D$. Since $X$ is finite-dimensional, $D$ is compact. Moreover, triangular defects are continuous; thus $Z$ is closed and

\begin{equation}
Z'=Z\cap D
\end{equation}

\noindent is a compact subset of $D$. By hypothesis, for every $z\in Z'$ there exist a relative neighborhood $U_z$ of $z$ in $D$ and constants $a_z,b_z>0$ such that

\begin{equation}
a_z \Delta_p (x , y) \leq \Delta_q (x,y) \leq b_z \Delta_p(x,y)
\end{equation}

\noindent for every $(x,y)\in U_z$. By the compactness of $Z'$, finitely many of these neighborhoods cover $Z'$. Hence there exist constants $a_0,b_0>0$ and a relative neighborhood $U$ of $Z'$ in $D$ such that

\begin{equation}
a_0 \Delta_p (x , y) \leq \Delta_q (x,y) \leq b_0 \Delta_p(x,y)
\end{equation}

\noindent for every $(x,y)\in U$.

The set $K = D\setminus U$ is compact and disjoint from $Z'$. Define 

\begin{equation}
R(x,y) = \frac{\Delta_q(x,y)}{\Delta_p(x,y)}
\end{equation}

\noindent for every $(x,y) \in K$. Since $R$ is continuous and positive on $K$, there exist $a_1,b_1>0$ such that

\begin{equation}
a_1\leq R(x,y)\leq b_1
\end{equation}

\noindent for every $(x,y)\in K$. Setting $a=\min\{a_0,a_1\}$ and $b=\max\{b_0,b_1\}$, we obtain

\begin{equation}
a\Delta_p(x,y)\leq\Delta_q(x,y)\leq b\Delta_p(x,y)
\end{equation}

\noindent for every $(x,y)\in D$. By positive homogeneity of the triangular defects, the same inequalities hold for every $(x,y)\in X^2$. Therefore, by Theorem \ref{theorem intrinsic parts triangular defect}, $p$ and $q$ belong to the same intrinsic part of $\mathcal{N}(X)$.
\end{proof}

\section{Intrinsic Hilbert metric on $\mathcal{N}'(X)$}

In this section, we study the intrinsic Hilbert metric on the projective cone of equivalent norms $\mathcal{N}'(X)$. For this purpose, we first introduce the relevant comparison values. Let $X$ be a Banach space and $p,q\in\mathcal{N}(X)$ belong to the same part of $\mathcal{N}(X)$. We define the intrinsic comparison values

\begin{equation}
M_{\mathcal{N}(X)}(q/p)=\inf \{\lambda>0\mid \lambda p-q\in\mathcal{N}(X)\}
\end{equation}

\noindent and

\begin{equation}
m_{\mathcal{N}(X)}(q/p)=\sup\{\lambda>0\mid q-\lambda p\in\mathcal{N}(X)\}.
\end{equation}

\noindent For the triangular defects, define analogously

\begin{equation}
M_{\log}(\Delta_q/\Delta_p) = \inf\left\{\lambda>0\mid\lambda\Delta_p(x,y)\geq\Delta_q(x,y)\text{ for every }x,y\in X \right\}
\end{equation}

\noindent and

\begin{equation}
m_{\log}(\Delta_q/\Delta_p) =\sup\left\{\lambda>0\mid\Delta_q(x,y)\geq\lambda\Delta_p(x,y)\text{ for every }x,y\in X \right\}.
\end{equation}

\noindent By Theorem \ref{theorem intrinsic parts triangular defect} these quantities satisfy

\begin{equation}
0<m_{\log}(\Delta_q/\Delta_p)\leq M_{\log}(\Delta_q/\Delta_p)<\infty.
\end{equation}

In the following proposition, we relate the intrinsic comparison values in $\mathcal{N}(X)$ to the comparison values associated with the symmetric logarithmic distance and the triangular defects.

\begin{proposition}\label{proposition intrinsic comparison values explicit formula}
Let $X$ be a Banach space and $p,q\in\mathcal{N}(X)$ in the same part of $N(X)$. Then

\begin{equation}
M_{\mathcal{N}(X)}(q/p) = \max\{M_{\log}(q/p),M_{\log}(\Delta_q/\Delta_p)\}
\end{equation}

\noindent and

\begin{equation}
m_{\mathcal{N}(X)}(q/p)=\min\{m_{\log}(q/p),m_{\log}(\Delta_q/\Delta_p)\}.
\end{equation}
\end{proposition}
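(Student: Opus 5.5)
The plan is to reduce everything to Lemma \ref{lemma characterization difference norms triangular defect}, which describes exactly the sets $\{\lambda>0\mid \lambda p-q\in\mathcal{N}(X)\}$ and $\{\lambda>0\mid q-\lambda p\in\mathcal{N}(X)\}$. Write $A=M_{\log}(q/p)$, $B=M_{\log}(\Delta_q/\Delta_p)$, and $S=\{\lambda>0\mid \lambda p-q\in\mathcal{N}(X)\}$. I will show that
\begin{equation}
(\max\{A,B\},\infty)\subseteq S\subseteq[\max\{A,B\},\infty).
\end{equation}
Taking infima then gives $M_{\mathcal{N}(X)}(q/p)=\max\{A,B\}$. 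The lower value is handled by the symmetric argument with $T=\{\lambda>0\mid q-\lambda p\in\mathcal{N}(X)\}$, sandwiched between $(0,\min\{m_{\log}(q/p),m_{\log}(\Delta_q/\Delta_p)\})$ and $(0,\min\{\cdots\}]$, after which we take suprema.

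\emph{Upper inclusion.} Let $\lambda\in S$. Lemma \ref{lemma characterization difference norms triangular defect} (with $\theta=1$) gives $\varepsilon>0$ such that $\lambda p(x)-q(x)\geq\varepsilon p(x)$, and also $\lambda\Delta_p\geq\Delta_q$ everywhere. The first inequality yields $q(x)/p(x)\leq\lambda-\varepsilon$ for $x\neq0$, so $A\leq\lambda-\varepsilon<\lambda$. The second says $\lambda$ belongs to the set whose infimum defines $B$, so $B\leq\lambda$. Hence $\lambda\geq\max\{A,B\}$.

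\emph{Lower inclusion.} Let $\lambda>\max\{A,B\}$. Then $\lambda p-q\geq(\lambda-A)p$ with $\lambda-A>0$, which is the first condition of the lemma with $\varepsilon=\lambda-A$. Since $\lambda>B$, there is some $\lambda'<\lambda$ with $\lambda'\Delta_p\geq\Delta_q$. Because $\Delta_p\geq0$, this gives $\lambda\Delta_p\geq\lambda'\Delta_p\geq\Delta_q$. The lemma then gives $\lambda p-q\in\mathcal{N}(X)$. This mirrors the argument in Theorem \ref{theorem intrinsic parts triangular defect}.

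For the minimum we argue the same way. If $\lambda\in T$, the lemma (applied with the roles $q,\,\lambda p$, i.e.\ $1\cdot q-\lambda p$) gives $q-\lambda p\geq\varepsilon q\geq\varepsilon l\,p$, so $m_{\log}(q/p)>\lambda$, and also $\Delta_q\geq\lambda\Delta_p$, so $m_{\log}(\Delta_q/\Delta_p)\geq\lambda$. Conversely, any $\lambda$ strictly below both infima satisfies both hypotheses of the lemma: uniform positivity comes from $q-\lambda p\geq(m_{\log}(q/p)-\lambda)p$, and the defect inequality comes from monotonicity in $\lambda$.

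The main obstacle is only bookkeeping. The Lemma is stated with $p$ as the reference norm in the uniform-positivity condition, so for $q-\lambda p$ I must either apply it with the roles of $p$ and $q$ swapped or convert $\varepsilon q$ to a multiple of $p$ using the equivalence of $p$ and $q$. I also have to handle the strict versus non-strict inequalities, which is why the sets are only sandwiched and not identified. The defect sets are intervals because $\Delta_p\geq0$, and the infimum/supremum in the definition of $B$ might not be attained. Neither issue affects the infimum or supremum. Finiteness and positivity of all these quantities come from Theorem \ref{theorem intrinsic parts triangular defect}.
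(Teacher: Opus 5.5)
Your proposal is correct and follows essentially the same route as the paper: both use Lemma~\ref{lemma characterization difference norms triangular defect} to write the admissible set of $\lambda$ as the intersection of a uniform-positivity condition and the defect condition $\lambda\Delta_p\geq\Delta_q$, and then take the infimum of that intersection; the uniform-positivity condition holds exactly when $\lambda>M_{\log}(q/p)$. Your sandwich formulation makes explicit two points the paper compresses into identifying the infimum of the intersection with the maximum of the two infima and into ``by an analogous argument'': the endpoint bookkeeping, and the role swap for $q-\lambda p$, where you convert $\varepsilon q$ into a multiple of $p$ via norm equivalence.
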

\begin{proof}
Let $p,q\in\mathcal{N}(X)$ be comparable with respect to the intrinsic order of $\mathcal{N}(X)$. We say that $p$ and $q$ satisfy property $P_\lambda$, for $\lambda>0$, if there exists $\varepsilon>0$ such that

\begin{equation}
(\lambda-\varepsilon)p(x)\geq q(x)
\end{equation}

\noindent for every $x\in X$. By Lemma \ref{lemma characterization difference norms triangular defect},

\begin{equation}
\begin{aligned}
M_{\mathcal{N}(X)}(q/p)
&=\inf\left\{\lambda>0\mid p\text{ and }q\text{ satisfy }P_\lambda,\ 
\lambda\Delta_p(x,y)\geq\Delta_q(x,y)\text{ for every }x,y\in X\right\}\\
&=\max\left\{\inf\{\lambda>0\mid p\text{ and }q\text{ satisfy }P_\lambda\},
M_{\log}(\Delta_q/\Delta_p)\right\}.
\end{aligned}
\end{equation}

\noindent We claim that

\begin{equation}
\inf \{\lambda>0\mid p\text{ and }q\text{ satisfy }P_\lambda\}= M_{\log}(q/p).
\end{equation}

\noindent Indeed, property $P_\lambda$ implies $\lambda p(x)\geq q(x)$ for every $x \in X$. Thus 

\begin{equation}
\inf \{\lambda>0\mid p\text{ and }q\text{ satisfy }P_\lambda\} \geq M_{\log}(q/p).
\end{equation}

\noindent On the other hand, if $\lambda>M_{\log}(q/p)$ one may choose $\varepsilon>0$ sufficiently small so that $(\lambda-\varepsilon)p(x)\geq q(x)$ for every $x \in X$. Hence

\begin{equation}
\inf \{\lambda>0\mid p\text{ and }q\text{ satisfy }P_\lambda\} \leq M_{\log}(q/p).
\end{equation}

\noindent Then

\begin{equation}
\inf\{\lambda>0\mid p\text{ and }q\text{ satisfy }P_\lambda\} = M_{\log}(q/p).
\end{equation}

\noindent Consequently,

\begin{equation}
M_{\mathcal{N}(X)}(q/p)= \max\{M_{\log}(q/p),M_{\log}(\Delta_q/\Delta_p)\}.
\end{equation}

\noindent By an analogous argument,

\begin{equation}
m_{\mathcal{N}(X)}(q/p)=\min\{m_{\log}(q/p),m_{\log}(\Delta_q/\Delta_p)\}.
\end{equation}
\end{proof}

\begin{theorem}\label{theorem explicit intrinsic Hilbert metric}
Let $X$ be a Banach space and let $p,q\in\mathcal{N}(X)$ belong to the same intrinsic part. Then the intrinsic Hilbert metric induced by $\mathcal{N}(X)$ satisfies

\begin{equation}
d_{\mathcal{N}(X)}([p],[q]) = \log \frac{ \max \{ M_{\log}(q/p), M_{\log}(\Delta_q/\Delta_p)\}}{\min \{ m_{\log}(q/p), m_{\log}(\Delta_q/\Delta_p) \}}.
\end{equation}
\end{theorem}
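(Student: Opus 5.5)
The argument is short, because the substantive work has already been done in Proposition \ref{proposition intrinsic comparison values explicit formula}. The plan is to identify the intrinsic Hilbert metric with the quotient of the intrinsic comparison values and then substitute the explicit formulas.

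First we check that the comparison values appearing in the definition of the Hilbert metric for the cone $C_0=\mathcal{N}(X)\cup\{0\}$ coincide with $M_{\mathcal{N}(X)}(q/p)$ and $m_{\mathcal{N}(X)}(q/p)$ as defined at the beginning of this section. By definition, $\lambda p\preceq_{C_0} q$ means $q-\lambda p\in\mathcal{N}(X)\cup\{0\}$.

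The only discrepancy is the possibility $q-\lambda p=0$, that is, $q=\lambda p$. This occurs for at most one value of $\lambda$. If it does occur, then $[p]=[q]$, and both sides of the formula equal $0$: the ratios $q/p$ and $\Delta_q/\Delta_p$ are the constant $\lambda$, so all four quantities equal $\lambda$. Concretely, $\Delta_q=\lambda\Delta_p$ by Item 1) of Proposition \ref{Proposition properties of triangular defect}, and $M_{\log}(\Delta_q/\Delta_p)=\lambda$ provided $\Delta_p\not\equiv 0$. If $\Delta_p\equiv 0$, then $\Delta_q\equiv 0$ as well, and the sup/inf conventions need a brief remark. In that degenerate case we would interpret the defect terms as imposing no constraint, or note that it is excluded: a norm on a space with $\dim X\ge1$ has $\Delta_p(x,-x)=2p(x)>0$.

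Otherwise, by the usual argument that a single excluded point does not change an infimum or supremum over an interval-type set, we get $M_{C_0}(q/p)=M_{\mathcal{N}(X)}(q/p)$ and $m_{C_0}(q/p)=m_{\mathcal{N}(X)}(q/p)$. Here we use that the admissible set of $\lambda$ for the upper comparison is upward closed: if $\lambda p-q\in\mathcal{N}(X)$ and $\mu>\lambda$, then $\mu p-q=(\lambda p-q)+(\mu-\lambda)p\in\mathcal{N}(X)$, as in the proof of Theorem \ref{theorem characterization of log metric as Hilbert metric}. Similarly, the admissible set for the lower comparison is downward closed.

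Next, since $p$ and $q$ lie in the same intrinsic part, Theorem \ref{theorem intrinsic parts triangular defect} guarantees that all quantities are finite and positive. The definition of the Hilbert metric then gives
\begin{equation*}
d_{\mathcal{N}(X)}([p],[q])=\log\frac{M_{\mathcal{N}(X)}(q/p)}{m_{\mathcal{N}(X)}(q/p)}.
\end{equation*}
Substituting the max and min expressions from Proposition \ref{proposition intrinsic comparison values explicit formula} yields the stated formula.

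Well-definedness on $[p],[q]$ (independence of representatives) is inherited from the general Hilbert metric. It can also be checked directly: replacing $p$ by $sp$ and $q$ by $tq$ scales all four quantities by the same factor $t/s$, using Item 1) of Proposition \ref{Proposition properties of triangular defect}.

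The main obstacle is the bookkeeping in the first step, namely reconciling $\preceq_{C_0}$, which allows the difference to be $0$, with the definitions of $M_{\mathcal{N}(X)}$ and $m_{\mathcal{N}(X)}$, which require membership in $\mathcal{N}(X)$. The monotonicity of the admissible sets settles this.
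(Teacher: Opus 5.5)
Your proof is correct and follows the paper's route: the paper's proof is exactly the identification $d_{\mathcal{N}(X)}([p],[q])=\log\bigl(M_{\mathcal{N}(X)}(q/p)/m_{\mathcal{N}(X)}(q/p)\bigr)$ followed by substitution of Proposition \ref{proposition intrinsic comparison values explicit formula}. Your additional bookkeeping is sound and fills a detail the paper leaves implicit: it reconciles $\preceq_{C_0}$, which allows $q-\lambda p=0$, with the definitions of $M_{\mathcal{N}(X)}$ and $m_{\mathcal{N}(X)}$, which require membership in $\mathcal{N}(X)$. Note that when $q$ is not a positive multiple of $p$ the two admissible sets coincide outright, so the monotonicity argument is only needed in the proportional case.
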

\begin{proof}
By the definition of the Hilbert projective metric, we have

\begin{equation}
d_{\mathcal{N}(X)}([p],[q]) = \log \frac{M_{\mathcal{N}(X)}(q/p)}{m_{\mathcal{N}(X)}(q/p)}.
\end{equation}

\noindent Thus, the conclusion follows from Proposition \ref{proposition intrinsic comparison values explicit formula}.
\end{proof}

\begin{remark}
The previous theorem shows that the intrinsic Hilbert metric contains two distinct comparison mechanisms. The symmetric logarithmic quantities $M_{\log}(q/p)$ and $m_{\log}(q/p)$ measure the pointwise comparison between the norms $p$ and $q$, whereas $M_{\log}(\Delta_q/\Delta_p)$ and $m_{\log}(\Delta_q/\Delta_p)$ measure the corresponding comparison between their triangular defects. Thus, the intrinsic metric simultaneously records the relative size of the norms and the way in which they satisfy the triangle inequality. The appearance of the maximum and minimum in the formula reflects the interaction of these two orders. In the next section, we show that this decomposition admits a natural realization as a Hilbert metric on a product cone of nonnegative functions.
\end{remark}

\begin{corollary}\label{corollary comparison of intrinsic and log metrics}
Let $X$ be a Banach space and let $p,q\in\mathcal{N}(X)$ belong to the same intrinsic part. Then
\begin{equation}
d_{\mathcal{N}(X)}([p],[q])\geq d_{\log}([p],[q]).
\end{equation}
\end{corollary}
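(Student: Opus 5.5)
Two routes are available, and both are short. The first uses the explicit formula of Theorem \ref{theorem explicit intrinsic Hilbert metric}. Since a maximum dominates each of its entries and a minimum is dominated by each of its entries, we have
\begin{equation*}
\max\{M_{\log}(q/p),M_{\log}(\Delta_q/\Delta_p)\}\geq M_{\log}(q/p),\qquad \min\{m_{\log}(q/p),m_{\log}(\Delta_q/\Delta_p)\}\leq m_{\log}(q/p).
\end{equation*}
All these quantities are strictly positive and finite. For the defect quantities this follows from Theorem \ref{theorem intrinsic parts triangular defect}; for the norm quantities it follows from the equivalence of $p$ and $q$. Hence the quotient inside the logarithm in Theorem \ref{theorem explicit intrinsic Hilbert metric} is at least $M_{\log}(q/p)/m_{\log}(q/p)$. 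By monotonicity of $\log$, this gives $d_{\mathcal{N}(X)}([p],[q])\geq d_{\log}([p],[q])$.

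The second route, as a cross-check, avoids the explicit formula and uses cone inclusion instead. We have $\mathcal{N}(X)\cup\{0\}\subset\mathbb{R}^X_+$, and both are cones with vertex. By Proposition \ref{Proposition comparison of Hilbert seminorms wrt subsets}, for $p,q$ in the same intrinsic part,
\begin{equation*}
d_{\mathcal{N}(X)}([p],[q])\geq d_{\mathbb{R}^X_+}([p],[q]).
\end{equation*}
By Proposition \ref{proposition pointwise cone induces logarithmic metric}, the right-hand side equals $d_{\log}([p],[q])$.

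Neither route has a real obstacle. The only point needing care is confirming that all comparison values are positive and finite, so that the logarithms and the inequality between quotients are legitimate. This is exactly where the hypothesis that $p$ and $q$ lie in the same intrinsic part is used, via Theorem \ref{theorem intrinsic parts triangular defect}. I would present the first route as the main proof and mention the second as a remark.
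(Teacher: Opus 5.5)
Your proposal is correct and follows the paper. The corollary is read off from the explicit formula in Theorem~\ref{theorem explicit intrinsic Hilbert metric} by noting that a maximum dominates, and a minimum is dominated by, the corresponding constants $M_{\log}(q/p)$ and $m_{\log}(q/p)$. Your second route, through the inclusion $\mathcal{N}(X)\cup\{0\}\subset\mathbb{R}^X_+$ together with Propositions~\ref{Proposition comparison of Hilbert seminorms wrt subsets} and~\ref{proposition pointwise cone induces logarithmic metric}, is exactly the observation the paper records in the remark following the corollary.
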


\begin{remark}
Since

\begin{equation}
\left(\mathcal{N}(X)\cup\{0\}\right)\subset\mathbb{R}^X_+,
\end{equation}

\noindent Propositions \ref{proposition pointwise cone induces logarithmic metric} and \ref{Proposition comparison of Hilbert seminorms wrt subsets} already imply that

\begin{equation}
d_{\mathcal{N}(X)}([p],[q])\geq d_{\log}([p],[q])
\end{equation}

\noindent whenever $p$ and $q$ belong to the same intrinsic part. The significance of Corollary \ref{corollary comparison of intrinsic and log metrics} is that the explicit formula for $d_{\mathcal{N}(X)}$ shows analytically how this refinement arises from the additional comparison of the triangular defects.
\end{remark}

\begin{example}
We now present an explicit example for which the intrinsic Hilbert distance on $\mathcal{N}(X)$ is strictly greater than the symmetric logarithmic distance.

Let $X=\mathbb{R}^2$, let $a>1$, and consider the norms $p,q\in\mathcal{N}(X)$ given by

\begin{equation}
p(x,y)=\sqrt{x^2 + y^2}, \qquad q(x,y)= \sqrt{a x^2 + y^2}.
\end{equation}

\noindent We first compute their symmetric logarithmic distance. Since $p$ and $q$ are equivalent norms, there exist optimal constants $l,u>0$ such that $lp(x,y)\leq q(x,y)\leq up(x,y)$ for every $(x,y)\in\mathbb{R}^2$. The first inequality is equivalent to

\begin{equation}
l^2\left(x^2 + y^2\right)\leq ax^2 + y^2,
\end{equation}

\noindent and therefore

\begin{equation}
(a-l^2) x^2+(1-l^2) y^2 \geq 0.
\end{equation}

\noindent Since $(x,y) \in \mathbb{R}^2$ is arbitrary and $l$ is optimal, it follows that $l=\min\{\sqrt{a},1\}=1$. Analogously, $u=\max\{\sqrt{a},1\}=\sqrt{a}$. Hence,

\begin{equation}\label{technical log simmetric distance}
d_{\log}([p],[q])=\log\frac{\sqrt{a}}{1}=\frac{1}{2}\log a
\end{equation}

\noindent and

\begin{equation}\label{technical log simmetric bounds}
m_{\log} (q/ p) = 1, \qquad M_{\log}(q / p) = \sqrt{a}.
\end{equation}
 
We next obtain two candidate bounds for the comparison of the corresponding triangular defects. Let $x=(1,0)$ and $x_t=(1,t)$ for every $t > 0$. Then

\begin{equation}
\Delta_p(x,x_t)=1+\sqrt{1+t^2}-\sqrt{4+t^2}
\end{equation}

\noindent and

\begin{equation}
\Delta_q(x,x_t)=\sqrt{a}+\sqrt{a+t^2}-\sqrt{4a+t^2}.
\end{equation}

\noindent Using the Taylor expansion $(1+t)^\alpha=\sum_{n=0}^{\infty}\binom{\alpha}{n}t^n$ around $t=0$, we obtain

\begin{equation}
\Delta_p(x,x_t)=\frac{1}{4}t^2+O(t^4), \qquad \Delta_q(x,x_t)=\frac{1}{4\sqrt{a}}t^2+O(t^4).
\end{equation}

\noindent Consequently,

\begin{equation}\label{technical limit 1}
\lim_{t\to0^+}\frac{\Delta_q(x,x_t)}{\Delta_p(x,x_t)}=\frac{1}{\sqrt{a}}.
\end{equation}

\noindent Now let $y=(0,1)$ and $y_t=(t,1)$ for every $t> 0$. Analogously,

\begin{equation}
\Delta_p(y,y_t)=\frac{1}{4}t^2+O(t^4), \qquad \Delta_q(y,y_t)=\frac{a}{4}t^2+O(t^4),
\end{equation}

\noindent and hence

\begin{equation}\label{technical limit 2}
\lim_{t\to0^+}\frac{\Delta_q(y,y_t)}{\Delta_p(y,y_t)}=a.
\end{equation}

\noindent These two limits suggest the optimal comparison constants for the triangular defects. 

We shall now prove that they are indeed universal and optimal, namely,

\begin{equation}
\frac{1}{\sqrt{a}}\Delta_p(x,y)\leq\Delta_q(x,y)\leq a\Delta_p(x,y)
\end{equation}

\noindent for every $x,y\in\mathbb{R}^2$. Since $p$ and $q$ are strictly convex, their triangular defects have the same zero set. Thus, it suffices to consider pairs $(x,y) \in \mathbb{R}^2 \times \mathbb{R}^2$ such that $\Delta_p(x,y)>0$. Let

\begin{equation}
Q_a=
\begin{pmatrix}
a&0\\
0&1
\end{pmatrix}.
\end{equation}

\noindent Then $q(x)^2=x^TQ_ax$, and hence $q$ is a Hilbert norm. If $x=(x_1,x_2)$ and $y=(y_1,y_2)$, its associated inner product is given by $\langle x,y\rangle_q=ax_1y_1+x_2y_2$.

For every Hilbert norm $\| \cdot \|$, rationalizing the triangular defect gives

\begin{equation}
\begin{split}
\|x\|+\|y\|-\|x+y\| &=\frac{(\|x\|+\|y\|)^2-\|x+y\|^2}{\|x\|+\|y\|+\|x+y\|}\\
              &=\frac{2\|x\|\|y\|-2\langle x,y\rangle}{\|x\|+\|y\|+\|x+y\|}.
\end{split}
\end{equation}

\noindent Therefore,

\begin{equation}
\frac{\Delta_q(x,y)}{\Delta_p(x,y)}
=
\frac{p(x)+p(y)+p(x+y)}{q(x)+q(y)+q(x+y)}
\frac{q(x)q(y)-\langle x,y\rangle_q}{p(x)p(y)-\langle x,y\rangle_p}.
\end{equation}

\noindent Set $A=p(x)+p(y)+p(x+y)$, $B=q(x)+q(y)+q(x+y)$, $C=q(x)q(y)-\langle x,y\rangle_q$, and $D=p(x)p(y)-\langle x,y\rangle_p$. We estimate $A/B$ and $C/D$ separately.

Since $a>1$, for every $x\in\mathbb{R}^2$ we have $p(x)\leq q(x)\leq\sqrt{a} p(x)$. Consequently,

\begin{equation}
A\leq B\leq\sqrt{a}A,
\end{equation}

\noindent and therefore

\begin{equation}
\frac{1}{\sqrt{a}}\leq\frac{A}{B}\leq1.
\end{equation}

\noindent We now estimate $C/D$. First, we prove that $C/D\geq1$. This is equivalent to

\begin{equation}
q(x)q(y)\geq p(x)p(y)+(a-1)x_1y_1.
\end{equation}

\noindent If the right-hand side is nonpositive, the inequality is immediate. Suppose that it is nonnegative. Squaring both sides, the desired inequality is equivalent to

\begin{equation}
(ax_1^2+x_2^2)(ay_1^2+y_2^2)\geq\left(p(x)p(y)+(a-1)x_1y_1\right)^2.
\end{equation}

\noindent Expanding and simplifying yields

\begin{equation}
2x_1^2y_1^2+x_1^2y_2^2+x_2^2y_1^2\geq2p(x)p(y)x_1y_1.
\end{equation}

\noindent Since

\begin{equation}
2x_1^2y_1^2+x_1^2y_2^2+x_2^2y_1^2=x_1^2p(y)^2+y_1^2p(x)^2,
\end{equation}

\noindent the preceding inequality becomes

\begin{equation}
\left(x_1p(y)-y_1p(x)\right)^2\geq0.
\end{equation}

\noindent Hence $C/D\geq1$.

We now prove that $C/D\leq a$. This inequality is equivalent to

\begin{equation}
q(x)q(y)+(a-1)x_2y_2\leq ap(x)p(y).
\end{equation}

\noindent If the left-hand side is nonpositive, the inequality follows immediately. Suppose therefore that it is positive. Squaring both sides and simplifying, we obtain

\begin{equation}
x_2^2q(y)^2+y_2^2q(x)^2-2q(x)q(y)x_2y_2\geq0.
\end{equation}

\noindent Equivalently,

\begin{equation}
\left(x_2q(y)-y_2q(x)\right)^2\geq0.
\end{equation}

\noindent Hence $C/D\leq a$. Therefore,

\begin{equation}
1\leq\frac{C}{D}\leq a.
\end{equation}

\noindent Combining the estimates for $A/B$ and $C/D$, we obtain

\begin{equation}
\frac{1}{\sqrt{a}}\leq\frac{\Delta_q(x,y)}{\Delta_p(x,y)}\leq a.
\end{equation}

\noindent Therefore, $p$ and $q$ belong to the same intrinsic part by Theorem \ref{theorem intrinsic parts triangular defect}. The two limits obtained above, (\ref{technical limit 1}) and (\ref{technical limit 2}), show that both constants $\frac{1}{\sqrt{a}}$ and $a$ are optimal. Hence,

\begin{equation}
m_{\log}(\Delta_q/\Delta_p)=\frac{1}{\sqrt{a}},\qquad M_{\log}(\Delta_q/\Delta_p)=a.
\end{equation}

\noindent By (\ref{technical log simmetric distance}) and (\ref{technical log simmetric bounds}), we have $m_{\log}(q/p)=1$ and $M_{\log}(q/p)=\sqrt{a}$. Therefore, Theorem \ref{theorem explicit intrinsic Hilbert metric} gives the explicit formula for the intrinsic Hilbert metric

\begin{equation}
\begin{split}
d_{\mathcal{N}(X)}([p],[q]) &=\log\frac{\max\{M_{\log}(q/p), M_{\log}(\Delta_q / \Delta_p)\}}{\min\{m_{\log}(q/p), m_{\log}(\Delta_q / \Delta_p)\}} \\
                            &=\log\frac{\max\{\sqrt{a},a\}}{\min\{1,1/\sqrt{a}\}}\\
                            &=\log\frac{a}{1/\sqrt{a}}\\
                            &=\frac{3}{2}\log a\\
                            &=3d_{\log}([p],[q]).
\end{split}
\end{equation}
\end{example}

\section{Canonical representation of intrinsic Hilbert metric on $\mathcal{N}'(X)$}
In this section, we show that the intrinsic Hilbert metric on $\mathcal{N}(X)$ can be realized as a natural Hilbert metric on a cone of nonnegative functions.

Let $A$ be a set. We denote by

\begin{equation}
\mathbb{R}^A_+=\{f:A\longrightarrow\mathbb{R}\mid f(a)\geq0\text{ for every }a\in A\}
\end{equation}

\noindent the cone of nonnegative real-valued functions on $A$.

Let $X$ be a Banach space. Define

\begin{equation}
\Phi:\mathcal{N}(X)\longrightarrow\mathbb{R}^X_+\times\mathbb{R}^{X\times X}_+
\end{equation}

\noindent by

\begin{equation}
\Phi(p)=(p,\Delta_p).
\end{equation}

\noindent The assignment $p\mapsto\Delta_p$ is additive and positively homogeneous, and extends linearly to $\mathcal{N}(X)-\mathcal{N}(X)$. In particular, $\Phi(\lambda p)=\lambda\Phi(p)$ for every $\lambda>0$, and hence $\Phi$ induces a well-defined map on the corresponding projective spaces given by $\widetilde{\Phi}([p])=[\Phi(p)]$.

Recall that if $C$ and $D$ are two cones with vertex, their product $C\times D$ is naturally endowed with the product order defined by

\begin{equation}
(c_1,d_1)\preceq_{C\times D}(c_2,d_2) \quad \Longleftrightarrow \quad c_2-c_1\in C\text{ and }d_2-d_1\in D.
\end{equation}

\noindent In what follows, every product of cones will be understood to be endowed with this order.

Set $K = C \times D$. Notice that if $p=(p_1,p_2) ,q=(q_1,q_2) \in K$ belong to the same intrinsic part, then

\begin{equation}\label{technical canonical decomposition of product intrinsict values 1}
\begin{split}
M_K(q/p) &=\inf\{\lambda>0 \mid q\preceq_K \lambda p\} \\
         &=\inf\{\lambda>0\mid q_1\preceq_C \lambda p_1 \text{ and } q_2\preceq_D \lambda p_2\}\\
         &=\max\{M_C(q_1/p_1),M_D(q_2/p_2)\}.
\end{split}
\end{equation}

\noindent Similarly, 

\begin{equation}\label{technical canonical decomposition of product intrinsict values 2}
m_K(q/p) = \min \{m_C(q_1/p_1),m_D(q_2/p_2) \}.
\end{equation}

The following technical lemma relates the intrinsic comparison values of the triangular defects to the comparison values induced by the pointwise order.

\begin{lemma}\label{lemma pointwise product cone induces logarithmic metric on triangular defects}
Let $X$ be a Banach space, and let $p,q\in\mathcal{N}(X)$ belong to the same intrinsic part. If

\begin{equation}
D=\mathbb{R}^{X\times X}_+,
\end{equation}

\noindent then

\begin{equation}
M_D(\Delta_q/\Delta_p)=M_{\log}(\Delta_q/\Delta_p), \qquad m_D(\Delta_q/\Delta_p)=m_{\log}(\Delta_q/\Delta_p).
\end{equation}
\end{lemma}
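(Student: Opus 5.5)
The plan is to unfold the definitions of the pointwise cone order on $D=\mathbb{R}^{X\times X}_+$ and compare them directly with the definitions of $M_{\log}(\Delta_q/\Delta_p)$ and $m_{\log}(\Delta_q/\Delta_p)$. This mirrors Proposition \ref{proposition pointwise cone induces logarithmic metric}. For $\lambda>0$, the relation $\Delta_q\preceq_D\lambda\Delta_p$ means $\lambda\Delta_p-\Delta_q\in\mathbb{R}^{X\times X}_+$, that is, $\lambda\Delta_p(x,y)\geq\Delta_q(x,y)$ for every $(x,y)\in X\times X$. Hence the set $\{\lambda>0\mid \Delta_q\preceq_D\lambda\Delta_p\}$ is literally the set whose infimum defines $M_{\log}(\Delta_q/\Delta_p)$. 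In the same way, $\lambda\Delta_p\preceq_D\Delta_q$ is exactly the pointwise inequality appearing in the definition of $m_{\log}(\Delta_q/\Delta_p)$. Taking infimum and supremum then gives the two equalities.

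Before doing this, I will check that the quantities $M_D$ and $m_D$ are defined, i.e.\ that $\Delta_p$ and $\Delta_q$ are nonzero elements of $D$ lying in the same part of $D$. Nonnegativity of $\Delta_p$ and $\Delta_q$ is the triangle inequality. Since $p$ and $q$ lie in the same intrinsic part, Theorem \ref{theorem intrinsic parts triangular defect} gives $a,b>0$ with $a\Delta_p\leq\Delta_q\leq b\Delta_p$ pointwise, which is precisely $D$-comparability. Thus $0<m_D\leq M_D<\infty$ by the remark following the definition of Hilbert's metric.

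The one delicate point is the degenerate case $\Delta_p\equiv 0$. This cannot occur: taking $y=-x$ gives $\Delta_p(x,-x)=2p(x)>0$ for $x\neq 0$. By the comparison above, $\Delta_q\not\equiv 0$ as well. I expect no real obstacle beyond making these well-definedness remarks explicit; the identities themselves are an immediate rewriting of the definitions.
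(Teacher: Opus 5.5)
Your proof is correct and follows the paper's argument: for each $\lambda>0$, the relation $\Delta_q\preceq_D\lambda\Delta_p$ (resp.\ $\lambda\Delta_p\preceq_D\Delta_q$) is exactly the pointwise inequality used to define $M_{\log}(\Delta_q/\Delta_p)$ (resp.\ $m_{\log}(\Delta_q/\Delta_p)$). So the infima and suprema coincide. Your additional well-definedness checks are also correct and make explicit what the paper leaves implicit: nonzeroness via $\Delta_p(x,-x)=2p(x)>0$, and $D$-comparability via Theorem \ref{theorem intrinsic parts triangular defect}.
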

\begin{proof}
Let $\lambda>0$. Then 

\begin{equation}
\Delta_q\preceq_D\lambda\Delta_p
\end{equation}

\noindent if and only if $\lambda\Delta_p-\Delta_q\in\mathbb{R}^{X\times X}_+$, which is equivalent to

\begin{equation}
\Delta_q(x,y)\leq\lambda\Delta_p(x,y)
\end{equation}

\noindent for every $x,y\in X$. Therefore,

\begin{equation}
M_D(\Delta_q/\Delta_p)=M_{\log}(\Delta_q/\Delta_p).
\end{equation}

Similarly,

\begin{equation}
\lambda\Delta_p\preceq_D\Delta_q
\end{equation}

\noindent if and only if

\begin{equation}
\lambda\Delta_p(x,y)\leq\Delta_q(x,y)
\end{equation}

\noindent for every $x,y\in X$. Hence,

\begin{equation}
m_D(\Delta_q/\Delta_p)=m_{\log}(\Delta_q/\Delta_p).
\end{equation}
\end{proof}

\begin{proposition}
Let $X$ be a Banach space and let

\begin{equation}
K=\mathbb{R}^X_+\times\mathbb{R}^{X\times X}_+.
\end{equation}

\noindent If $p,q\in\mathcal{N}(X)$ belong to the same intrinsic part, then

\begin{equation}
d_K([\Phi(p)],[\Phi(q)])=d_{\mathcal{N}(X)}([p],[q]).
\end{equation}
\end{proposition}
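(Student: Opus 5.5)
The plan is to compute the comparison values of $\Phi(q)$ relative to $\Phi(p)$ in the product cone $K$ coordinatewise, and then match them with the intrinsic values from Proposition \ref{proposition intrinsic comparison values explicit formula}.

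The first step is to check that $\Phi(p)$ and $\Phi(q)$ are comparable in $K$, so that $d_K$ is given by the finite formula. Since $p,q$ lie in the same intrinsic part, Theorem \ref{theorem intrinsic parts triangular defect} gives constants $a,b>0$ with $a\Delta_p\leq\Delta_q\leq b\Delta_p$. Norm equivalence gives $lp\leq q\leq up$. Taking $\alpha=\min\{a,l\}$ and $\beta=\max\{b,u\}$, we get
\begin{equation*}
\alpha\Phi(p)\preceq_K\Phi(q)\preceq_K\beta\Phi(p),
\end{equation*}
so the two images are comparable.

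Next, apply the product decomposition (\ref{technical canonical decomposition of product intrinsict values 1}) and (\ref{technical canonical decomposition of product intrinsict values 2}) with $C=\mathbb{R}^X_+$ and $D=\mathbb{R}^{X\times X}_+$. This yields
\begin{equation*}
M_K(\Phi(q)/\Phi(p))=\max\{M_C(q/p),M_D(\Delta_q/\Delta_p)\}
\end{equation*}
and
\begin{equation*}
m_K(\Phi(q)/\Phi(p))=\min\{m_C(q/p),m_D(\Delta_q/\Delta_p)\}.
\end{equation*}

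The step needing the most care is justifying the max/min decomposition itself. The set $\{\lambda>0\mid q_1\preceq_C\lambda p_1\}$ is upward closed, because $(\lambda'-\lambda)p_1\in C$ for $\lambda'>\lambda$. The intersection of two upward-closed rays has infimum equal to the maximum of the two infima, and dually the supremum for the lower constants is the minimum. The paper states this, so I would only briefly verify the upward closedness.

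Now substitute the coordinate values. Proposition \ref{proposition pointwise cone induces logarithmic metric} gives $M_C(q/p)=M_{\log}(q/p)$ and $m_C(q/p)=m_{\log}(q/p)$. Lemma \ref{lemma pointwise product cone induces logarithmic metric on triangular defects} gives the corresponding equalities for the triangular defects in $D$. By Proposition \ref{proposition intrinsic comparison values explicit formula}, the resulting max and min are exactly $M_{\mathcal{N}(X)}(q/p)$ and $m_{\mathcal{N}(X)}(q/p)$.

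Finally, take logarithms of the quotient. This gives $d_K(\Phi(p),\Phi(q))$ equal to the right-hand side in Theorem \ref{theorem explicit intrinsic Hilbert metric}, which is $d_{\mathcal{N}(X)}([p],[q])$. Since $\Phi$ is positively homogeneous, $d_K$ is well defined on projective classes, and the identity passes to $[\Phi(p)]$ and $[\Phi(q)]$.
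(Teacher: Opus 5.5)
Your proposal is correct and follows the paper's proof essentially verbatim. The paper also decomposes $M_K$ and $m_K$ into a coordinatewise max/min, identifies the coordinate constants via Proposition \ref{proposition pointwise cone induces logarithmic metric} and Lemma \ref{lemma pointwise product cone induces logarithmic metric on triangular defects}, and concludes with Theorem \ref{theorem explicit intrinsic Hilbert metric}. Your explicit check that $\Phi(p)$ and $\Phi(q)$ are comparable in $K$, and that the comparison sets are upward/downward closed, fills in details the paper leaves implicit.
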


\begin{proof}
Set $C= \mathbb{R}^X_+$ and $D= \mathbb{R}^{X \times X}_+$. Since $K=C\times D$ is endowed with the product order, using (\ref{technical canonical decomposition of product intrinsict values 1}), (\ref{technical canonical decomposition of product intrinsict values 2}), Proposition \ref{proposition pointwise cone induces logarithmic metric} and Lemma \ref{lemma pointwise product cone induces logarithmic metric on triangular defects} we have

\begin{equation}
\begin{split}
M_K(\Phi(q)/\Phi(p)) & = \max\left\{M_C(q/p),M_D(\Delta_q/\Delta_p)\right\} \\
                     & = \max\left\{M_{\log}(q/p),M_{\log}(\Delta_q/\Delta_p)\right\}.
\end{split}
\end{equation}

\noindent and

\begin{equation}
\begin{split}
m_K(\Phi(q)/\Phi(p)) & = \min \left\{m_C(q/p),m_D(\Delta_q/\Delta_p)\right\} \\ 
                     & = \min \left\{m_{\log}(q/p),m_{\log}(\Delta_q/\Delta_p)\right\}.
\end{split}
\end{equation}

\noindent The conclusion now follows from Theorem \ref{theorem explicit intrinsic Hilbert metric}.
\end{proof}

Although the preceding proposition realizes the intrinsic metric through the cone $K$, the order induced by $K$ on $\Phi(\mathcal{N}(X))$ does not in general coincide with the intrinsic order of $\mathcal{N}(X)$. Indeed, pointwise nonnegativity of the first coordinate does not guarantee that a difference of equivalent norms is itself an equivalent norm. We next restrict the first coordinate in order to recover the intrinsic cone order exactly.

We define

\begin{equation}
U(X) = \left\{
f\in\mathcal{N}(X)-\mathcal{N}(X)\;\middle|\;
\begin{array}{c}
\text{there exists }r\in\mathcal{N}(X) \text{ such that}\\[1mm]
f(x)\geq r(x)\text{ for every }x\in X
\end{array}
\right\} \cup \{ 0 \}
\end{equation}

\noindent Note that 

\begin{equation}
\mathcal{N}(X) \subset U(X) \subset \mathbb{R}^X_+.
\end{equation}

\begin{proposition}
Let $X$ be a Banach space. Then $U(X)$ is a cone with vertex.
\end{proposition}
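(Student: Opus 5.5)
The plan is to check the defining properties of a cone with vertex directly from the definition of $U(X)$, working inside the real vector space $\mathbb{R}^X$ (or equivalently $\operatorname{span}\mathcal{N}(X)=\mathcal{N}(X)-\mathcal{N}(X)$). Four properties are needed: $U(X)$ is nonempty and contains $0$, it is stable under positive scalars, it is convex, and $U(X)\cap(-U(X))\subset\{0\}$.

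Nonemptiness and $0\in U(X)$ hold by construction. Note also that $\mathcal{N}(X)\subset U(X)$: for $p\in\mathcal{N}(X)$ we may write $p=2p-p\in\mathcal{N}(X)-\mathcal{N}(X)$, and $r=p$ witnesses the lower bound.

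For positive homogeneity, let $f=p-q$ with $f\geq r$ pointwise, and let $\lambda>0$. Then $\lambda f=\lambda p-\lambda q$ is again a difference of equivalent norms, and $\lambda f\geq\lambda r$ with $\lambda r\in\mathcal{N}(X)$.

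For convexity, it suffices to show closure under sums, since together with homogeneity this gives convex combinations. If $f_i=p_i-q_i\geq r_i$ for $i=1,2$, then
\begin{equation}
f_1+f_2=(p_1+p_2)-(q_1+q_2)\in\mathcal{N}(X)-\mathcal{N}(X),
\end{equation}
and $f_1+f_2\geq r_1+r_2\in\mathcal{N}(X)$. The sum of two equivalent norms is again an equivalent norm, as noted in the introduction. If one of the summands is $0$, the claim is trivial.

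For pointedness, suppose $f$ and $-f$ both lie in $U(X)$. If $f\neq0$, then $f\geq r$ for some $r\in\mathcal{N}(X)$, so $f(x)>0$ for every $x\neq0$. If also $-f\neq0$, the same argument gives $-f(x)>0$ for $x\neq0$. Both cannot hold, since $X\neq\{0\}$. Hence one of $f,-f$ equals $0$, and so $f=0$. Alternatively, $U(X)\subset\mathbb{R}^X_+$, which is itself pointed.

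There is no real obstacle here. The only point needing care is bookkeeping: the union with $\{0\}$ in the definition means the cases where a summand or scalar multiple involves $0$ must be handled separately. One should also make sure that membership in $\mathcal{N}(X)-\mathcal{N}(X)$ is preserved by these operations, which follows because $\mathcal{N}(X)$ is closed under sums and positive scalings.
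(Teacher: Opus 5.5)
Your proposal is correct and follows essentially the same route as the paper: closure under positive scalars and sums via the witnesses $\lambda r$ and $r_1+r_2$, and pointedness from the incompatible strict positivity forced by $f\geq r$ and $-f\geq s$. You are somewhat more explicit than the paper on bookkeeping (that $0\in U(X)$, that convexity follows from sums plus homogeneity, and the alternative via $U(X)\subset\mathbb{R}^X_+$), but the argument is the same.
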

\begin{proof}
Let $f_1,f_2\in U(X)\setminus \{0\}$ and $\lambda>0$. Then, for each $i=1,2$, there exist $p_i,q_i,r_i\in\mathcal{N}(X)$ such that

\begin{equation}
f_i(x)=p_i(x)-q_i(x)\geq r_i(x)
\end{equation}

\noindent for every $x\in X$. Thus

\begin{equation}
\lambda f_1 (x)  = \lambda  p_1(x) - \lambda q_1 (x) \geq \lambda r_1(x), \qquad  x \in X 
\end{equation}

\noindent and

\begin{equation}
f_1 (x) +  f_2 (x) = (p_1 + p_2) (x) - (q_1 + q_2)(x) \geq (r_1 + r_2) (x), \qquad x \in X.
\end{equation}

\noindent That is, $\lambda f_1 , f_1 + f_2 \in U(X)$. 

Finally, let $f \in U(X) \cap (-U(X))$. Thus $f, -f \in U(X)$. If $f \neq 0$, then there exists $r, s\in \mathcal{N}(X)$ such that $f(x) \geq r(x)$ and $-f(x) \geq s(x)$ for every $x \in X$. Combining these two inequalities yields a contradiction. Hence $f \equiv 0$.
\end{proof}

Now we define 

\begin{equation}
H(X) = U(X) \times \mathbb{R}^{X\times X}_+.
\end{equation}

\noindent The cone $H(X)$ can be viewed as a canonical cone of nonnegative functions associated with $\mathcal{N}'(X)$. 

\begin{theorem}
Let $X$ be a Banach space. For every $p,q\in\mathcal{N}(X)$,

\begin{equation}
\Phi(q)\preceq_{H(X)}\Phi(p) \quad\Longleftrightarrow\quad q\preceq_{\mathcal{N}(X)} p.
\end{equation}

\noindent Moreover, if $p$ and $q$ belong to the same intrinsic part, then

\begin{equation}
d_{H(X)}([\Phi(p)],[\Phi(q)])=d_{\mathcal{N}(X)}([p],[q]).
\end{equation}
\end{theorem}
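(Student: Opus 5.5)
The plan is to unwind both orders coordinatewise and match them with the two conditions of Lemma \ref{lemma characterization difference norms triangular defect}; the metric statement will then follow formally, because $\Phi$ is positively homogeneous.

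\emph{Order equivalence.} By definition of the product order, $\Phi(q)\preceq_{H(X)}\Phi(p)$ means two things: $p-q\in U(X)$, and $\Delta_p-\Delta_q\in\mathbb{R}^{X\times X}_+$, that is, $\Delta_q(x,y)\leq\Delta_p(x,y)$ for all $x,y$. On the other side, $q\preceq_{\mathcal{N}(X)}p$ means $p-q\in\mathcal{N}(X)\cup\{0\}$. I would dispose of the case $p=q$ first: both sides hold trivially, since $0\in U(X)$ and $0\in\mathbb{R}^{X\times X}_+$.

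For ($\Leftarrow$) with $p-q\in\mathcal{N}(X)$, we have $p-q\in\mathcal{N}(X)-\mathcal{N}(X)$. It dominates the norm $r:=p-q$ itself, so $p-q\in U(X)$. Lemma \ref{lemma characterization difference norms triangular defect} with $\lambda=\theta=1$ then gives $\Delta_p\geq\Delta_q$.

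For ($\Rightarrow$) with $p\neq q$, we get $p-q\in U(X)\setminus\{0\}$, so there is $r\in\mathcal{N}(X)$ with $p(x)-q(x)\geq r(x)$ for all $x$. Since $r$ and $p$ are equivalent norms, $r\geq\varepsilon p$ for some $\varepsilon>0$, hence $p-q\geq\varepsilon p$. Together with $\Delta_p\geq\Delta_q$, Lemma \ref{lemma characterization difference norms triangular defect} with $\lambda=\theta=1$ yields $p-q\in\mathcal{N}(X)$.

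\emph{Metric equality.} I would apply the order equivalence to the pairs $(\lambda p,q)$ and $(q,\lambda p)$ for $\lambda>0$. This is legitimate because $\lambda p\in\mathcal{N}(X)$ and $\Phi(\lambda p)=\lambda\Phi(p)$ by Proposition \ref{Proposition properties of triangular defect}. It gives
\begin{equation}
\{\lambda>0\mid \Phi(q)\preceq_{H(X)}\lambda\Phi(p)\}=\{\lambda>0\mid q\preceq_{\mathcal{N}(X)}\lambda p\},
\end{equation}
and the analogous identity with $\lambda\Phi(p)\preceq_{H(X)}\Phi(q)$. From these I would draw three conclusions. First, intrinsic comparability of $p,q$ implies $H(X)$-comparability of $\Phi(p),\Phi(q)$, so they lie in a common part of $H(X)$. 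Second, the infima and suprema coincide: $M_{H(X)}(\Phi(q)/\Phi(p))=M_{\mathcal{N}(X)}(q/p)$ and $m_{H(X)}(\Phi(q)/\Phi(p))=m_{\mathcal{N}(X)}(q/p)$. Third, taking the logarithm of the ratio gives $d_{H(X)}([\Phi(p)],[\Phi(q)])=d_{\mathcal{N}(X)}([p],[q])$, and this is independent of representatives by homogeneity.

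As a cross-check, I would also derive the metric equality via (\ref{technical canonical decomposition of product intrinsict values 1}), (\ref{technical canonical decomposition of product intrinsict values 2}), Lemma \ref{lemma pointwise product cone induces logarithmic metric on triangular defects} and Theorem \ref{theorem explicit intrinsic Hilbert metric}. That route is weaker, though: it would also require showing $M_{U(X)}(q/p)=M_{\log}(q/p)$, which Theorem \ref{theorem characterization of log metric as Hilbert metric} (item 4) supplies. The direct argument avoids this.

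The main obstacle I expect is bookkeeping rather than depth. One must keep track of the vertex $0$ in the cones: the case $\lambda p=q$ gives $0$ in both orders consistently. One must also note that membership of $p-q$ in $U(X)$ depends only on the function $p-q$ and not on its representation as a difference of norms. This is why the lower bound by some $r\in\mathcal{N}(X)$ must be converted into the uniform bound $\varepsilon p$ before invoking the lemma.
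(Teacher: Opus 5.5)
Your proposal is correct and follows essentially the paper's proof. You unwind the product order, convert $p-q\geq r$ into $p-q\geq\varepsilon p$ using the equivalence of $r$ and $p$, and apply Lemma \ref{lemma characterization difference norms triangular defect} with $\lambda=\theta=1$; you then use $\Phi(\lambda p)=\lambda\Phi(p)$ to identify the sets defining $M$ and $m$ in the two cones, and you make explicit steps the paper leaves compressed, such as the case $p=q$ and the choice $r=p-q$ showing $p-q\in U(X)$ in the reverse direction.
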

\begin{proof}
By definition of the product order,

\begin{equation}
\Phi(q)\preceq_{H(X)}\Phi(p)
\end{equation}

\noindent if and only if $p-q \equiv 0$ or $p - q \in U(X)$ and $\Delta_p - \Delta_q \in \mathbb{R}^{X \times X}_+$. In the second case, there exists $r\in\mathcal{N}(X)$ such that

\begin{equation}\label{technical difference of norms}
p(x)-q(x)\geq r(x)
\end{equation}

\noindent for every $x\in X$, and

\begin{equation}\label{technical difference of triangular defects}
\Delta_p(x,y)-\Delta_q(x,y)\geq0
\end{equation}

\noindent for every $x,y\in X$. Since $r$ and $p$ are equivalent norms, there exists $\varepsilon >0$ such that $r(x) \geq \varepsilon p(x)$ for every $x \in X$. Thus, using (\ref{technical difference of norms}), we have

\begin{equation}\label{technical difference of norms 2}
p(x) - q(x) \geq \varepsilon p(x), \qquad x \in X
\end{equation}  

\noindent By Lemma \ref{lemma characterization difference norms triangular defect}, in the second case, conditions (\ref{technical difference of triangular defects}) and (\ref{technical difference of norms 2}) are equivalent to $p-q\in\mathcal{N}(X)$. Together with the case $p -q \equiv 0$, we obtain

\begin{equation}\label{technical same cone order}
\Phi(q)\preceq_{H(X)}\Phi(p) \quad \Longleftrightarrow \quad q \preceq_{\mathcal{N}(X)} p.
\end{equation}

We now prove the isometric identity. By the linearity of $\Phi$ and (\ref{technical same cone order}), we have

\begin{equation}
\begin{split}
M_{H(X)}(\Phi(q)/\Phi(p)) & = \inf\left\{\lambda>0\mid \Phi(q)\preceq_{H(X)}\lambda\Phi(p)\right\} \\
                     & = \inf\left\{\lambda>0\mid q\preceq_{\mathcal{N}(X)}\lambda p\right\}\\
                     & = M_{\mathcal{N}(X)}(q/p).
\end{split}
\end{equation}

\noindent Analogously,

\begin{equation}
m_{H(X)}(\Phi(q)/\Phi(p)) = m_{\mathcal{N}(X)}(q/p).
\end{equation}

\noindent Finally,

\begin{equation}
d_{H(X)}([\Phi(p)],[\Phi(q)]) = d_{\mathcal{N}(X)}([p],[q]).
\end{equation}
\end{proof}

\begin{remark}
The preceding theorem shows that the triangular defect is not merely an auxiliary quantity used to characterize differences of equivalent norms. The map $\Phi$, given by $\Phi(p)=(p,\Delta_p)$, represents the intrinsic order of $\mathcal{N}(X)$ as an order on a cone of nonnegative functions and preserves the corresponding Hilbert metric on each intrinsic part. In this sense, the pair $(p,\Delta_p)$ provides two natural coordinates for the intrinsic cone geometry of $\mathcal{N}(X)$: the first records the pointwise size of the norm, while the second records its triangular structure.

\end{remark}

\end{document}